\documentclass{article}
\usepackage{graphicx}
\usepackage{cite}
\IfFileExists{authblk.sty}{\usepackage{authblk}}{}
\usepackage{amsthm}
\usepackage{amsmath}
\usepackage{amssymb}
\usepackage[left=3cm, right=3cm,top=1.0in, bottom=1.0in]{geometry}
\usepackage[colorlinks=true]{hyperref}

\date{}
\title{Graph-Compatible Power Concavity in Weakly Coupled Elliptic Systems}
\author{Jiahuan Li, Hanpeng Zhou}

\newcommand{\keywords}[1]{\par\quad\textbf{Keywords:} #1}

\begin{document}
\maketitle

\newtheorem{theorem}{Theorem}[section]
\newtheorem{definition}[theorem]{Definition}
\newtheorem{lemma}[theorem]{Lemma}
\newtheorem{corollary}[theorem]{Corollary}
\newtheorem{example}[theorem]{Example}
\newtheorem{proposition}[theorem]{Proposition}
\newtheorem{conjecture}[theorem]{Conjecture}
\newtheorem{remark}[theorem]{Remark}
\newtheorem{assumption}[theorem]{Assumption}

\begin{abstract}

For multicomponent weakly coupled elliptic systems, we introduce graph-compatible powers constrained by the support of the coupling matrix and the nonlinear exponents. We establish concavity of the associated transformed components through a concave-envelope method and a weighted viscosity comparison principle. We further establish a componentwise constant-rank theorem for the transformed Hessians, which yields strict power concavity under additional assumptions.

\end{abstract}

\keywords{Power concavity; weakly coupled elliptic systems; graph-compatible
powers; concave envelope; viscosity comparison; constant rank theorem;
strict concavity.}

\section{Introduction}

Concavity properties of solutions to nonlinear elliptic and
parabolic equations have played an important role in the qualitative theory
of partial differential equations. They provide information on the geometry of
level sets, critical points, Hessian structures, and geometric inequalities of
Brunn--Minkowski and Pr\'ekopa--Leindler type.
In particular, concavity properties of solutions to nonlinear elliptic
equations have become an important tool for understanding the interaction
between the analytic structure of equations and the geometry of their
solutions. Classical results concerning convexity of the level sets of Green functions go
back to Carath\'eodory (see \cite{Caratheodory1920}) and Gabriel
\cite{Gabriel1957}.
Makar-Limanov proved the power-concavity property for the torsion function
in planar convex domains \cite{Limanov1971}.
Brascamp and Lieb established the log-concavity of the first Dirichlet
eigenfunction of the Laplacian and the preservation of log-concavity of
initial data under the heat flow \cite{BrascampLieb1976}.

Two complementary approaches have emerged in the study of concavity of
solutions. In the macroscopic approach, Korevaar developed the concavity
maximum principle for nonlinear elliptic and parabolic boundary value
problems \cite{Korevaar1983}. Kennington then developed the
power-concavity theory for nonlinear elliptic boundary value problems, while
Kawohl refined Korevaar's concavity method and related uniqueness issues
\cite{Kawohl1986,Kennington1985}. The viscosity framework for concavity
maximum principles was further developed by Alvarez, Lasry, and Lions
\cite{AlvarezLasryLions1997}, and Salani extended concave-envelope techniques
to nonlinear elliptic and Hessian-type equations and related convexity
inequalities \cite{Salani2012}.

The complementary microscopic approach is based on constant-rank methods. Caffarelli and Friedman established a foundational constant-rank theorem for the Hessians of solutions to semilinear elliptic equations, which became a key tool for obtaining strict convexity \cite{CaffarelliFriedman1985}. Korevaar--Lewis subsequently
established constant-rank properties for convex solutions of elliptic
equations in higher dimensions \cite{KorevaarLewis1987}. Later, Guan and Ma
extended constant-rank techniques to geometric fully nonlinear elliptic
equations \cite{GuanMa2003}. Caffarelli, Guan, and Ma further developed this
theory for fully nonlinear elliptic equations and established structural
conditions ensuring constant-rank properties
\cite{CaffarelliGuanMa2007}. Bian and Guan later established
constant-rank theorems for more general fully nonlinear elliptic equations
\cite{BianGuan2009,BianGuan2010}.

For weakly coupled systems, Ishige, Nakagawa, and Salani developed a
concave-envelope method and obtained power-concavity results for the
two-component cross-coupled Lane--Emden system
\[
-\Delta u=\lambda_1v^\alpha,\qquad
-\Delta v=\lambda_2u^\beta,
\]
and for its parabolic counterpart \cite{IshigeNakagawaSalani2016}. In that
setting the two transformation powers are linked by a pair of homogeneity
relations. For a general directed coupling graph, however, one component may
depend on several neighboring components, and all active terms in the same
equation must acquire the same homogeneity after transformation. This leads
to the edgewise compatibility conditions introduced below. To the best of our
knowledge, a corresponding theory for general multicomponent systems with
nontrivial coupling graphs has not previously been developed.

In this paper, we study the multicomponent weakly coupled elliptic system
\begin{equation}
\begin{cases}
-\Delta u_i=
\displaystyle\sum_{j=1}^{m}a_{ij}u_j^{\alpha_j},
&x\in\Omega,\\[2mm]
u_i>0,
&x\in\Omega,\\
u_i=0,
&x\in\partial\Omega,
\end{cases}
\qquad i=1,\ldots,m,
\label{eq:system}
\end{equation}
where $\Omega\subset\mathbb R^n$ is bounded and convex,
$\alpha_j>0$, $a_{ij}\geq0$, and every row of $A=(a_{ij})$ contains a positive entry.
We associate with $A$ the directed graph
\[
i\longrightarrow j\Longleftrightarrow a_{ij}>0 .
\]
An edge $i\to j$, together with the corresponding term and exponent, will be
called active in the $i$-th equation.

In the multicomponent setting, the power transformations cannot be chosen
independently but must be compatible with the coupling graph. We introduce
graph-compatible powers $p_i>0$ satisfying
\begin{equation}
\frac1{p_i}=2+\frac{\alpha_j}{p_j},
\qquad a_{ij}>0,
\label{eq:compatibility}
\end{equation}
together with
\begin{equation}
0<\frac{\alpha_j}{p_j}\leq1,
\qquad a_{ij}>0.
\label{eq:small-coupling}
\end{equation}
These relations identify the natural transformations $U_i=u_i^{p_i}$. The results below extend the power-concavity theory from two-component systems to general multicomponent weakly coupled elliptic systems.

A second contribution concerns the constant-rank property of the Hessians of
the transformed components. Although constant-rank theorems have been
extensively developed for scalar nonlinear elliptic equations, to the best of
our knowledge no such result is available for multicomponent weakly coupled
elliptic systems of the present type. We establish a componentwise
constant-rank theorem and derive strict power concavity under additional
conditions.

We now state the main results of this paper.

\begin{theorem}\label{thm:existence-power}
Let $\Omega\subset\mathbb R^n$ be a bounded convex domain with $C^{1,1}$
boundary. Assume that each row of $A=(a_{ij})$ has at least one positive
entry and that there exist $p_i>0$ satisfying
\eqref{eq:compatibility}--\eqref{eq:small-coupling}. Then
\eqref{eq:system} admits a unique positive classical solution
\[
u=(u_1,\ldots,u_m),\qquad
u_i\in C^\infty(\Omega)\cap C^{1,\gamma}(\overline\Omega)
\quad\text{for every }0<\gamma<1.
\]
Moreover, $U_i:=u_i^{p_i}$ is concave in $\Omega$ for every $i$. If, in addition, $\partial\Omega\in C^{2,\sigma}$ for some
$0<\sigma<1$, then
\[
u_i\in C^{2,\delta_i}(\overline\Omega),\qquad
\delta_i:=\min\!\left\{\sigma,
\min_{j:\,a_{ij}>0}\alpha_j\right\}.
\]
\end{theorem}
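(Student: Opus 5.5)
The proof splits into three parts: existence, uniqueness and regularity for the original system, power concavity by a concave-envelope argument, and the boundary Schauder estimate. The structural observation behind everything is that \eqref{eq:compatibility}--\eqref{eq:small-coupling} force $1/p_i=2+\alpha_j/p_j>2$, so $p_i<1/2$. Hence $\alpha_j\le p_j<1/2$ for every active exponent, and \eqref{eq:system} is a \emph{cooperative, sublinear} system.

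\textbf{Existence, uniqueness and regularity.} For existence I will use monotone iteration between an ordered pair of sub- and supersolutions.
\begin{itemize}
\item A subsolution is $\varepsilon(\phi_1,\dots,\phi_1)$, with $\phi_1$ the first Dirichlet eigenfunction and $\varepsilon$ small; this uses $\alpha_j<1$ and that every row of $A$ has a positive entry.
\item A supersolution is $M(w,\dots,w)$, with $w$ the torsion function and $M$ large, again by sublinearity.
\item Cooperativity ($a_{ij}\ge 0$) makes the iteration monotone.
\end{itemize}
For uniqueness I will use the Krasnosel'skii scaling trick. Given two positive solutions $u,v$, both are comparable to $d(x)=\operatorname{dist}(x,\partial\Omega)$ by Hopf's lemma, so $t^*=\sup\{t: tv\le u\}$ is positive. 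If $t^*<1$, then $t^*v$ is a strict subsolution, because $(t^*)^{\alpha_j}>t^*$. The strong maximum principle and Hopf's lemma, applied componentwise, then let me enlarge $t^*$, a contradiction. By symmetry $u=v$.

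For regularity, $u_j\sim d$ gives $u_j^{\alpha_j}\in C^{0,\alpha_j}(\overline\Omega)$, and $u_j^{\alpha_j}$ is $C^\infty$ in the interior. Standard $L^p$, $C^{1,\gamma}$ and interior Schauder bootstrapping then give $u_i\in C^\infty(\Omega)\cap C^{1,\gamma}(\overline\Omega)$. Global Schauder estimates with a $C^{2,\sigma}$ boundary and right-hand side in $C^{0,\min_{a_{ij}>0}\alpha_j}$ give the $C^{2,\delta_i}$ claim.

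\textbf{Transformed system.} Writing $u_i=U_i^{1/p_i}$ and $\beta_j=\alpha_j/p_j$, a direct computation using $1-1/p_i=-1-\beta_j$ on active edges gives
\[
\Delta U_i+\Big(\frac1{p_i}-1\Big)\frac{|\nabla U_i|^2}{U_i}+p_i\sum_{j}a_{ij}\,U_i^{-1}\Big(\frac{U_j}{U_i}\Big)^{\beta_j}=0 .
\]
This equation is one-homogeneous under $U\mapsto tU$ exactly because of \eqref{eq:compatibility}. Since \eqref{eq:compatibility} must hold for \emph{all} active $j$ in row $i$, every term shares the same homogeneity.

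\textbf{Concavity.} I follow the concave-envelope method of Ishige--Nakagawa--Salani. Let $U_i^*$ be the concave envelope of $U_i$. Since $p_i<1/2$ and $u_i\sim d$, we have $U_i\sim d^{p_i}$ and $|\nabla U_i|\to\infty$ at $\partial\Omega$. Therefore, at any $x=\sum\lambda_k x_k$ where the envelope is realized, all $x_k$ lie in $\Omega$, and $U_i^*=U_i$ on $\partial\Omega$. At such points the gradients agree, and the Hessian of $U_i^*$ is bounded via the harmonic-type combination of the $D^2U_i(x_k)$.

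I then show that $U^*=(U_i^*)$ is a viscosity supersolution of the transformed system. This needs the nonlinearity
\[
(U_i,U_j,\xi)\mapsto\Big(\frac1{p_i}-1\Big)\frac{|\xi|^2}{U_i}+p_iU_i^{-1-\beta_j}U_j^{\beta_j}
\]
to satisfy the Ishige--Nakagawa--Salani concavity condition along convex combinations. I expect to obtain it from the joint concavity of $(s,t)\mapsto s^{1-\beta}t^{\beta}$, which holds for $0<\beta\le1$ and is exactly where \eqref{eq:small-coupling} enters, after multiplying the equation by $U_i$. Each coupling term is handled edge by edge and the terms are summed.

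Finally, I prove a weighted comparison principle for this cooperative, singular system between the viscosity supersolution $U^*$ and the classical solution $U$. The proof uses the scaling argument again: let $t^*=\sup\{t:\ tU^*\le U\}$ and exploit homogeneity together with strict monotonicity in $U_j$, with weights $U_i$ absorbing the singular factor $U_i^{-1}$. Comparison yields $U^*\le U$. Since $U\le U^*$ trivially, $U_i=U_i^*$ is concave.

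\textbf{Main obstacle.} I expect the hardest step to be the viscosity comparison for the coupled singular system, together with verifying the envelope supersolution inequality when different components attain their envelopes at different tuples of points $x_k$. I would first try to reduce the latter to a common convex combination by using the componentwise structure and monotonicity in $U_j$.
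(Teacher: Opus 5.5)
The overall architecture matches the paper's. You use sub/supersolutions for existence. For the envelope, you test through the tuple realizing $\Gamma_i(x_0)$, together with $\Gamma_j(x_0)\ge\sum_\ell\lambda_\ell U_j(x_\ell)$ and the monotonicity of $F_i$ in $U_j$; this is exactly your proposed fix for ``different tuples''. You then run a scaling comparison whose parameter $t^*$ is the reciprocal of the paper's $t_0=\max_i\sup_\Omega\Gamma_i/U_i$. Monotone iteration instead of Schauder, and uniform Krasnosel'skii scaling for uniqueness of two classical solutions, are fine.

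The genuine gap is the comparison $U^*\le U$, in two respects.

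First, the transformed system is \emph{not} one-homogeneous under $U\mapsto tU$. The terms $\Delta U_i$ and $|\nabla U_i|^2/U_i$ scale by $t$, while the coupling term $U_i^{-1}(U_j/U_i)^{s_i}$ scales by $t^{-1}$; what compatibility actually gives is $F_i(tU,\xi)=t^{-1}F_i(U,\xi)$ for fixed $\xi$. This matters for your mechanism. With genuine homogeneity plus monotonicity in $U_j$, a contact occurring simultaneously in all components would yield equality, not a contradiction. The strictness comes precisely from the mismatch of scalings, i.e.\ from $1/p_i=2+\alpha_j/p_j>\alpha_j/p_j$. Concretely, if $t^*<1$ and component $i$ touches at an interior point $x_0$, testing $U_i^*$ with $U_i/t^*$ gives, at $x_0$,
$-\Delta U_i\le\frac{1-p_i}{p_i}\frac{|\nabla U_i|^2}{U_i}+(t^*)^2\frac{p_i}{U_i}\sum_j a_{ij}(U_j/U_i)^{s_i}<F_i(U,\nabla U_i)$.

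Second, you do not address the boundary at all.
\begin{itemize}
\item Having $t^*>0$ requires $U_i^*\le C d^{p_i}$. This follows from $U_i\le Cd^{p_i}$, the concavity of $d^{p_i}$ on the convex domain, and the minimality of the envelope; none of this appears in your plan.
\item You must also exclude that $\inf_\Omega U_i/U_i^*$ is approached only along $x_k\to\partial\Omega$. There your transformed operator is singular, and ``weights absorbing $U_i^{-1}$'' gives nothing.
\end{itemize}
The paper avoids this by transferring back to the original variables. By Lemma~\ref{lem:monotone-change}, $\widehat u_i=\Gamma_i^{1/p_i}$ is a viscosity subsolution of the original, nonsingular system, with $\widehat u_i\le C_i d$. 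In Lemma~\ref{lem:weighted-comparison}, $w=\widehat u_{i_0}-t_0^{1/p_{i_0}}u_{i_0}$ is viscosity subharmonic and $w<0$. Uniform interior-ball Hopf barriers then give $w(x_k)\le -c\,d(x_k)$, contradicting $w(x_k)/d(x_k)\to0$.

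There are also two corrections in the envelope step. The envelope is a viscosity \emph{subsolution}: it is tested from above, and only a subsolution can be compared to yield $U^*\le U$.

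The required structure is dictated by the construction. Take $a_\ell\propto F_\ell^{-1}$ in $\varphi(x)=\sum_\ell\lambda_\ell U_i(x_\ell+a_\ell(x-x_0))$; then $-\Delta\varphi(x_0)=(\sum_\ell\lambda_\ell F_\ell^{-1})^{-1}$. So what you need is convexity of $U\mapsto 1/F_i(U,\xi)$.

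Your $s^{1-\beta}t^\beta$ idea delivers this, but only after multiplying by $U_i^2$, not by $U_i$. The function $G_i:=U_i^2F_i=\frac{1-p_i}{p_i}|\xi|^2U_i+p_i\sum_j a_{ij}U_i^{1-s_i}U_j^{s_i}$ is concave, by summing edge by edge. Then $1/F_i=U_i^2/G_i$ is convex, because $(x,y)\mapsto x^2/y$ is convex and nonincreasing in $y$.

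Multiplying only by $U_i$ leaves the terms $(U_j/U_i)^{s_i}$, which are not jointly concave. Harmonic concavity itself is not preserved under sums, so the edgewise summation must be done at the level of $G_i$. The paper does the analogous step at the level of $g_i$ and then takes the perspective of $1/g_i$.
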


\begin{theorem}\label{thm:constant-rank-intro}
Under the hypotheses of Theorem \ref{thm:existence-power},
\[
\operatorname{rank}(-D^2 U_i)
\]
is constant in $\Omega$ for every $i$.
\end{theorem}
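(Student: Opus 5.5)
The plan is to fix an index $i$ and write a scalar elliptic equation for $U_i$ in which the other components enter only as given concave functions of $x$. Then I would run a Caffarelli--Friedman / Korevaar--Lewis / Bian--Guan type constant-rank argument for $-D^2U_i$.

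Put $q_k:=1/p_k$ and $\beta_j:=\alpha_j/p_j\in(0,1]$, by \eqref{eq:small-coupling}. Substituting $u_i=U_i^{q_i}$ into \eqref{eq:system} and using \eqref{eq:compatibility} in the form $1-q_i=-1-\beta_j$ for every active $j$, a short computation gives
\[
U_i\,\Delta U_i=(1-q_i)|\nabla U_i|^2-p_i\sum_{j:\,a_{ij}>0}a_{ij}\,U_j^{\beta_j}\,U_i^{-\beta_j}
\qquad\text{in }\Omega .
\]
The point of the compatibility relations is that every active term has the same homogeneity. The equation is therefore of the form $\Delta U_i=f_i(x,U_i,\nabla U_i)$ with
\[
f_i(x,s,p)=\frac{(1-q_i)|p|^2}{s}-\frac{p_i}{s}\sum_j a_{ij}\,g_j(x)\,s^{-\beta_j},
\qquad g_j:=U_j^{\beta_j}.
\]
Here $g_j$ is positive, smooth in $\Omega$, and concave, because $U_j$ is concave by Theorem \ref{thm:existence-power} and $t\mapsto t^{\beta_j}$ is concave and increasing for $\beta_j\le1$.

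Next I would set up the constant-rank argument. Let $W:=-U_i$, which is convex and smooth in the interior by Theorem \ref{thm:existence-power}. Let $\ell$ be the minimal rank of $D^2W$ over $\Omega$, attained at some $x_0$, and consider the set $\mathcal O=\{x:\operatorname{rank}D^2W(x)=\ell\}$. This set is relatively closed in $\Omega$, since rank is lower semicontinuous and $\ell$ is the minimum. I would show it is open, so that connectedness of $\Omega$ gives the claim.

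Near a point of $\mathcal O$, take the Bian--Guan test function
\[
\phi=\sigma_{\ell+1}(D^2W)+\frac{\sigma_{\ell+2}(D^2W)}{\sigma_{\ell+1}(D^2W)}.
\]
I would split directions into the "good" ones, the $\ell$ large eigenvalues, and the "bad" ones, the small eigenvalues. Differentiating the equation twice in bad directions $e_k$ and arguing as in \cite{BianGuan2009}, one should obtain
\[
\Delta\phi\le C(\phi+|\nabla\phi|)
\]
modulo a term
\[
-\sum_{k\ \mathrm{bad}}\Big[\partial_{kk}\big(f_i(x,W,\nabla W)\big)\Big]_{\text{lower order}},
\]
which has to be shown nonpositive up to $O(\phi+|\nabla\phi|)$. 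The strong maximum principle then forces $\phi\equiv0$ near the point, which gives local constancy of rank.

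The main obstacle is this structural sign condition, which replaces the Bian--Guan convexity assumption. One must check that the contribution of the $x$- and $s$-second derivatives of $f_i$ in bad directions has the right sign; here $\nabla U_i$ is small in bad directions after rotation, and the Hessian of $U_i$ vanishes there to first order.

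The $x$-part is favourable. Since $s^{-1-\beta_j}>0$ and $g_j$ is concave, $-\partial_{kk}g_j\ge0$, and this gives the correct sign. The cross terms $\partial_x g_j\,\partial_s$ and the $s$-second derivatives of $s^{-1-\beta_j}$ and of $|p|^2/s$ are the delicate part. I would first try to handle them by checking that the map
\[
(x,s)\mapsto -\frac{p_i\,g_j(x)}{s^{1+\beta_j}}
\]
is jointly concave in the appropriate "inverse" variables. Concretely, I would verify the Bian--Guan condition that $(A,s,x)\mapsto \operatorname{tr}(A^{-1})-f_i$ is locally convex, using $\beta_j\le1$ and the identity $1-q_i=-1-\beta_j$.

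If a residual term of bad sign remains, it should involve only $\nabla_{\mathrm{bad}}U_i$ or $\nabla_{\mathrm{bad}}g_j$. I would try to absorb it into $C(\phi+|\nabla\phi|)$ using the concavity of $U_i$. A fallback is to exploit $D^2U_j\le0$ together with the equations for the neighbouring components.
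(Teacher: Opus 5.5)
Your reduction to a scalar equation for the convex function $W=-U_i$, with the other components frozen as functions of $x$, and your appeal to Bian--Guan theory both match the paper. However, the structural condition is the entire content of the proof, and there your proposal has a genuine gap: the condition you say you would verify is false for this equation.

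For the convex solution $W$, the operator is $\operatorname{tr}R-F_i(V^{(i)}(z,x),-p)$ with $U_i=-z$. Your expression $\operatorname{tr}(A^{-1})-f_i$ has the sign appropriate to the concave $U_i$, to which the theorem does not apply. Now $F_i$ is a combination of $U_i^{-1}$ and $U_i^{-1-s_i}$ with nonnegative coefficients, at least one of them positive; this includes the $|p|^2/U_i$ and $a_{ii}/U_i$ terms. Hence $\operatorname{tr}(A^{-1})-F_i$ is strictly concave in $z$ at every point, and the 2009 function-convexity condition fails everywhere.

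Read literally, your per-term check also fails. Joint concavity of $-p_ig_j(x)s^{-1-\beta_j}$ is false already for affine $g_j$, because $(g,s)\mapsto g\,s^{-1-\beta}$ has zero $gg$-entry and nonzero mixed entry in its Hessian.

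Moreover, nothing in the microscopic argument makes $\partial_kW$ small for bad $k$ (think of $h(x_1)+cx_2$). So the wrong-signed term $\partial_{zz}F_i\,(\partial_kW)^2$ in $\Delta W_{kk}$ is of order one at a minimal-rank point and cannot be absorbed into $C(\phi+|\nabla\phi|)$. It can only be offset by the good third-derivative terms. Schematically, Cauchy--Schwarz gives $\sum_{\alpha\ \mathrm{good}}W_{\alpha\alpha k}^2/\lambda_\alpha\gtrsim(\partial_kF_i)^2/F_i$, and $\partial_{kk}F_i-2(\partial_kF_i)^2/F_i=-F_i^2\,\partial_{kk}(1/F_i)$. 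What is really needed is convexity of $1/F_i$, not of $F_i$ or of its individual terms.

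The missing idea, which is exactly how the paper proceeds, is to use the level-set form of the structural condition (Theorem~\ref{thm:bian-guan}). Equivalently, you can rewrite the equation as $-1/\operatorname{tr}D^2W+1/F_i=0$, for which the function-convexity condition does hold on $\{\operatorname{tr}R>0\}$. The sublevel set $\{\operatorname{tr}(A^{-1})\le F_i\}$ equals $\{1/\operatorname{tr}(A^{-1})\ge 1/F_i(V^{(i)}(z,x),-p)\}$. The left side is concave in $A$ by Lemma~\ref{lem:all}. The right side is convex in $(z,x)$ for three reasons: $1/F_i$ is jointly convex in the value variables as a perspective function (Proposition~\ref{prop:minus-one-concavity}), it is nonincreasing in each active $U_j$, and each $U_j$ is concave.

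This step needs concavity of $U_j$ itself, not merely of your $g_j=U_j^{\beta_j}$. The map $(s,g)\mapsto s^{1+\beta}/g$ has Hessian determinant equal to a positive multiple of $\beta-1$, so for $\beta<1$ it is not jointly convex. With merely concave (e.g.\ nonconstant affine) coefficients $g_j$, the required convexity of $1/F_i$ fails near any point where $\nabla U_i=0$ and $a_{ii}=0$. Your sign bookkeeping, which uses only $-\partial_{kk}g_j\ge0$, therefore cannot close the argument, and the suggested fallbacks do not supply the missing convexity.
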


\begin{theorem}\label{thm:strict-intro}
Assume the hypotheses of Theorem \ref{thm:existence-power}. If, in addition,
$\partial\Omega\in C^{2,\sigma}$ for some $0<\sigma<1$ and $\Omega$ is
uniformly convex, then $D^2U_i$ is negative definite in $\Omega$ for every $i$.
In particular, every $U_i$ is strictly concave in $\Omega$.
\end{theorem}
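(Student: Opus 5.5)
By Theorem \ref{thm:constant-rank-intro}, the rank of $-D^2U_i$ is constant in $\Omega$ for each $i$. It therefore suffices to find, for each $i$, one point where $-D^2U_i$ has full rank $n$. Since $U_i$ is concave by Theorem \ref{thm:existence-power}, $-D^2U_i\ge0$ everywhere, so full rank at a single point gives full rank everywhere, i.e. $D^2U_i<0$ in $\Omega$. Strict concavity then follows at once.

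I would look for the full-rank point near $\partial\Omega$, using uniform convexity and the $C^{2,\delta_i}(\overline\Omega)$ regularity from Theorem \ref{thm:existence-power}. Set $q_i=p_i$ and fix $x_0\in\partial\Omega$. The Hopf lemma gives $|\nabla u_i(x_0)|>0$, and each $u_i$ is $C^2$ up to the boundary. In coordinates near $x_0$, let $e_n$ be the inner normal, so that $u_i\approx d\,\partial_\nu u_i$ with $d$ the distance to the boundary. Differentiating $U_i=u_i^{p_i}$ gives
\[
D^2U_i=p_iu_i^{p_i-2}\Bigl(u_iD^2u_i-(1-p_i)\nabla u_i\otimes\nabla u_i\Bigr).
\]
Note that $0<p_i<1/2$ because $1/p_i=2+\alpha_j/p_j>2$, so $1-p_i>0$. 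I would examine the bracket $M_i:=u_iD^2u_i-(1-p_i)\nabla u_i\otimes\nabla u_i$ at points $x=x_0+te_n$ with $t\to0^+$.

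In the normal direction, $u_i\to0$ while $\nabla u_i\to|\nabla u_i(x_0)|e_n\neq0$, so $\langle M_ie_n,e_n\rangle\to-(1-p_i)|\nabla u_i(x_0)|^2<0$. In the tangential directions, $u_i=0$ on $\partial\Omega$ gives
\[
\partial_{\tau\tau}u_i(x_0)=-\kappa_\tau\,\partial_\nu u_i(x_0),
\]
where $\kappa_\tau\ge\kappa_0>0$ by uniform convexity and $\partial_\nu u_i>0$ with $\nu$ the inner normal. Hence $u_i\partial_{\tau\tau}u_i\approx -t\,\kappa_\tau|\nabla u_i(x_0)|^2$, which is negative of order $t$. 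The tangential block of $M_i$ thus behaves like $-t\,\kappa|\nabla u_i|^2$ with $\kappa$ positive definite. The mixed entries $u_i\partial_{\tau n}u_i=O(t)$, and the $\nabla u_i\otimes\nabla u_i$ term has tangential components $O(t)$, contributing $O(t^2)$ to the tangential block and $O(t)$ to the mixed entries.

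The main obstacle is showing that the $O(t)$ mixed entries do not destroy definiteness. A Schur complement handles this: the normal entry is of order $1$ and the mixed entries are $O(t)$, so the correction to the tangential block is $O(t^2)$, which is dominated by the $-t\kappa$ term. Hence $M_i<0$ at $x_0+te_n$ for $t$ small. Making this rigorous needs the $C^2$ continuity of $D^2u_i$ up to $\partial\Omega$, which is exactly the role of $\partial\Omega\in C^{2,\sigma}$. This yields a point in $\Omega$ where $\operatorname{rank}(-D^2U_i)=n$, and Theorem \ref{thm:constant-rank-intro} finishes the proof for every $i$.
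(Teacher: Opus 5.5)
Your proposal is correct and follows the paper's proof: constant rank (Theorem~\ref{thm:constant-rank}) combined with negativity of $D^2U_i$ near $\partial\Omega$, which comes from the Hopf lemma, uniform convexity, and the $C^{2,\delta_i}(\overline\Omega)$ regularity. The only difference is cosmetic: the paper (Lemmas~\ref{lem:boundary1}--\ref{lem:boundary-strict}) gets negativity on a whole boundary strip by splitting along $\nabla u_i$ and its orthogonal complement, while your Schur-complement computation along a single inner normal ray suffices, since one full-rank point is enough.
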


The paper is organized as follows. Section 2 develops graph-compatible powers,
derives the transformed system, establishes the inverse-convexity property,
and presents representative graph examples. Section 3 proves Theorem
\ref{thm:existence-power}, including existence, uniqueness, and power
concavity. Section 4 proves the constant-rank theorem and then derives strict
power concavity.

\section*{Acknowledgments}
The authors thank Professor Xi-Nan Ma for bringing this question to their attention.
Both authors were supported by the National Key Research and Development Program of China
(Grant No. 2025YFA1017601).

\section{Graph-Compatible Powers and Structural Convexity}

\subsection{Graph-compatible powers and the transformed system}

\begin{assumption}\label{ass:structural}
There exist $p_i>0$ such that, for every edge $i\to j$,
\[
\frac1{p_i}=2+\frac{\alpha_j}{p_j},\qquad
0<\frac{\alpha_j}{p_j}\leq1.
\]
\end{assumption}

Since every row has at least one outgoing edge, Assumption \ref{ass:structural} gives
$1/p_i>2$, and hence
\begin{equation}\label{eq:pi-half}
0<p_i<\frac12.
\end{equation}
For a fixed row $i$, all outgoing edges have the same value of $\alpha_j/p_j$, because
\[
\frac{\alpha_j}{p_j}=\frac1{p_i}-2.
\]
We denote this common value by
\[
s_i:=\frac1{p_i}-2.
\]
Thus
\begin{equation}\label{eq:si-range}
0<s_i\leq1.
\end{equation}
Moreover, for each active edge $i\to j$,
\[
\alpha_j=s_ip_j\leq p_j<1.
\]
Thus all active exponents are sublinear.

Let $U_i=u_i^{p_i}$. A direct computation gives
\[
\nabla u_i=\frac1{p_i}U_i^{1/p_i-1}\nabla U_i,
\]
and
\[
\Delta u_i=
\frac1{p_i}U_i^{1/p_i-1}\Delta U_i
+\frac1{p_i}\left(\frac1{p_i}-1\right)U_i^{1/p_i-2}|\nabla U_i|^2.
\]
Substitution into \eqref{eq:system} and multiplication by $p_iU_i^{1-1/p_i}$ yield
\[
-\Delta U_i=
\frac{1-p_i}{p_i}\frac{|\nabla U_i|^2}{U_i}
+p_i\sum_{j:i\to j}a_{ij}U_i^{1-1/p_i}U_j^{\alpha_j/p_j}.
\]
By graph compatibility,
\[
1-\frac1{p_i}=-1-s_i,
\]
and therefore
\begin{equation}\label{eq:transformed-system}
-\Delta U_i=F_i(U,\nabla U_i),
\end{equation}
where
\begin{equation}\label{eq:F-def}
F_i(U,\xi)=U_i^{-1}\left[
\frac{1-p_i}{p_i}|\xi|^2
+p_i\sum_{j:i\to j}a_{ij}\left(\frac{U_j}{U_i}\right)^{s_i}
\right].
\end{equation}

\subsection{Inverse convexity and examples}

We use the following two elementary convexity facts. If $h$ is positive and
concave on a convex set, then $1/h$ is convex. Moreover, if
$h:(0,\infty)^k\to\mathbb R$ is convex, then its perspective
\[
(x,y_1,\ldots,y_k)\mapsto xh(y_1/x,\ldots,y_k/x)
\]
is convex on $x>0$, $y_j>0$; see \cite{Rockafellar1970}.

\begin{proposition}\label{prop:minus-one-concavity}
Under Assumption \ref{ass:structural}, for every fixed $\xi\in\mathbb R^n$, the function $U\mapsto F_i(U,\xi)$ is $(-1)$-concave on the positive cone, namely
\[
U\mapsto \frac1{F_i(U,\xi)}
\]
is convex. Moreover, $F_i$ is nondecreasing in every active component $U_j$, $j\neq i$.
\end{proposition}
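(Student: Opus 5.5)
Fix $i$ and $\xi$, and write $c_i:=\frac{1-p_i}{p_i}|\xi|^2\ge0$ (recall $0<p_i<\frac12$ by \eqref{eq:pi-half}). The plan is to write $1/F_i$ explicitly as a perspective of a convex function of the active variables, so that the second fact recalled before the proposition applies directly. From \eqref{eq:F-def},
\[
\frac1{F_i(U,\xi)}=U_i\,h_i\!\left(\frac{U_{j_1}}{U_i},\ldots,\frac{U_{j_k}}{U_i}\right),
\qquad
h_i(y):=\frac1{g_i(y)},\quad g_i(y):=c_i+p_i\sum_{j:i\to j}a_{ij}y_j^{s_i},
\]
where $j_1,\ldots,j_k$ are the active indices of row $i$ with $j\ne i$. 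A self-loop $a_{ii}>0$ contributes the constant $p_ia_{ii}$ to $g_i$, since $(U_i/U_i)^{s_i}=1$, and is absorbed into $c_i$. The variable $U_i$ plays the role of $x$ and the $U_j/U_i$ play the role of $y_j/x$. All other components of $U$ do not appear, and a convex function extended constantly in extra variables remains convex.

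The key steps are as follows.
\begin{enumerate}
\item Show that $g_i$ is positive and concave on $(0,\infty)^k$. By \eqref{eq:si-range}, $0<s_i\le1$, so each $y_j\mapsto y_j^{s_i}$ is concave. A nonnegative combination of these plus the constant $c_i$ is concave. Positivity holds because the row has a positive entry: either some $y_j$-term or the self-loop constant is strictly positive.
\item Conclude, by the first elementary fact (a positive concave function has convex reciprocal), that $h_i=1/g_i$ is convex.
\item Apply the perspective fact to get that $(U_i,U_{j_1},\ldots,U_{j_k})\mapsto U_i h_i(U_{j}/U_i)$ is convex on the positive cone. This is exactly $U\mapsto 1/F_i(U,\xi)$.
\end{enumerate}

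The one point needing care is the degenerate case where row $i$ has only the self-loop, so $k=0$. Then $1/F_i=U_i/(c_i+p_ia_{ii})$ is linear, hence convex, and the same argument covers it trivially.

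For monotonicity, fix $j\ne i$ active. Since $s_i>0$ and $a_{ij}>0$, the map $U_j\mapsto (U_j/U_i)^{s_i}$ is increasing. Hence $F_i$, which is $U_i^{-1}$ times a sum with nonnegative coefficients, is nondecreasing (indeed increasing) in $U_j$.

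The only real obstacle is the bookkeeping that matches $1/F_i$ exactly to the perspective form. This works precisely because graph compatibility makes all active terms in row $i$ share the common exponent $s_i$, giving the factor $U_i^{-1}=U_i^{-1-s_i}\cdot U_i^{s_i}$. It is also where the hypothesis $s_i\le1$ from \eqref{eq:small-coupling} is used.
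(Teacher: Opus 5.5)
Your proposal is correct and follows the paper's proof essentially step for step. You write $1/F_i=U_i\,h_i(U_j/U_i)$ as the perspective of $h_i=1/g_i$, with $g_i$ positive and concave because $0<s_i\le 1$. You treat the self-loop-only row separately, where $1/F_i$ is linear, and you read monotonicity directly off \eqref{eq:F-def}. The only blemish is notational: in your definition of $g_i$ the self-loop should appear explicitly as the constant $p_ia_{ii}$ rather than inside the sum over the variables $y_j$, which is what the paper does.
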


\begin{proof}
For fixed $i$ and $\xi$, write
\[
F_i(U,\xi)=U_i^{-1}\left[
A_i+B_{ii}+\sum_{\substack{j:i\to j\\j\neq i}}B_{ij}\left(\frac{U_j}{U_i}\right)^{s_i}
\right],
\]
where
\[
A_i=\frac{1-p_i}{p_i}|\xi|^2\geq0,\qquad B_{ij}=p_ia_{ij}\geq0.
\]
If there is no active edge $i\to j$ with $j\neq i$, then
$1/F_i(U,\xi)=U_i/(A_i+B_{ii})$ is linear in $U_i$, and the conclusion is
immediate. Otherwise, set
\[
g_i(t)=A_i+B_{ii}+\sum_{\substack{j:i\to j\\j\neq i}}B_{ij}t_j^{s_i},\qquad t_j>0.
\]
Because $0<s_i\leq1$, the function $g_i$ is positive and concave. Thus
$h_i=1/g_i$ is convex. Hence
\[
\frac1{F_i(U,\xi)}
=U_ih_i\left(\left(\frac{U_j}{U_i}\right)_{\substack{j:i\to j\\j\neq i}}\right)
\]
is the perspective of a convex function and is therefore convex. The
monotonicity of $F_i$ in each active $U_j$, $j\neq i$, is immediate from
\eqref{eq:F-def}.
\end{proof}

\paragraph{Single-cycle coupling.}

Consider an $m$-cycle
\[
1\to2,\quad 2\to3,\quad \ldots,\quad m\to1.
\]
The system is
\[
-\Delta u_i=\lambda_i u_{i+1}^{\alpha_{i+1}},
\qquad \lambda_i>0,
\]
with indices understood modulo $m$. The compatibility equations are
\[
\frac1{p_i}=2+\frac{\alpha_{i+1}}{p_{i+1}}.
\]
Iterating around the cycle gives
\[
\frac1{p_i}=
\frac{
2(1+\alpha_{i+1}+\alpha_{i+1}\alpha_{i+2}
+\cdots+\alpha_{i+1}\cdots\alpha_{i-1})
}{
1-\alpha_1\alpha_2\cdots\alpha_m
}.
\]
Thus these compatibility equations admit positive powers $p_i$ if and only if
\[
\prod_{i=1}^m\alpha_i<1.
\]
The additional condition \eqref{eq:small-coupling} is
\[
\frac{\alpha_i}{p_i}\leq1
\]
for every $i$.

\paragraph{Fully coupled systems with equal exponents.}

Assume that the graph is fully coupled, by which we mean that $a_{ij}>0$ for all $1\leq i,j\leq m$, and
\[
\alpha_1=\cdots=\alpha_m=\alpha.
\]
Then graph compatibility forces
\[
p_1=\cdots=p_m=:p.
\]
The relation is
\[
\frac1p=2+\frac\alpha p,
\]
so
\[
p=\frac{1-\alpha}{2}.
\]
Thus the compatibility equation \eqref{eq:compatibility} admits a positive solution precisely when $0<\alpha<1$. The additional condition \eqref{eq:small-coupling} is
\[
\frac\alpha p\leq1,
\]
which is equivalent to
\[
\alpha\leq\frac13.
\]
Hence, in the fully coupled equal-exponent case, Theorems
\ref{thm:existence-power}--\ref{thm:strict-intro} apply when
\[
0<\alpha\leq\frac13,\qquad p=\frac{1-\alpha}{2},
\]
provided the domain satisfies the corresponding boundary assumptions.

\section{Proof of Theorem \ref{thm:existence-power}}

This section proves existence, uniqueness, and power concavity. We first
construct the positive solution of \eqref{eq:system} directly, without
introducing an $\varepsilon$-regularized problem.

\subsection{Existence and uniqueness}

Let $\phi_1>0$ be the first Dirichlet eigenfunction,
\[
-\Delta\phi_1=\lambda_1(\Omega)\phi_1,\qquad \phi_1=0\quad\hbox{on }\partial\Omega.
\]
For small $\delta>0$, set
\[
\underline u_i=\delta\phi_1.
\]
For each $i$, choose $j_i$ such that $a_{ij_i}>0$. Since
$0<\alpha_{j_i}<1$,
\[
\delta^{\alpha_{j_i}-1}\longrightarrow+\infty
\qquad\hbox{as }\delta\downarrow0.
\]
Moreover, because $\alpha_{j_i}-1<0$ and $\phi_1$ is bounded above,
\[
\phi_1^{\alpha_{j_i}-1}
\geq \|\phi_1\|_{L^\infty(\Omega)}^{\alpha_{j_i}-1}
\qquad\hbox{in }\Omega.
\]
Since there are only finitely many rows, for all sufficiently small $\delta>0$,
\[
-\Delta\underline u_i
=\delta\lambda_1(\Omega)\phi_1
\leq a_{ij_i}(\delta\phi_1)^{\alpha_{j_i}}
\leq\sum_{j=1}^m a_{ij}(\delta\phi_1)^{\alpha_j}
=\sum_{j=1}^m a_{ij}\underline u_j^{\alpha_j}.
\]
Thus $\underline u$ is a positive subsolution.

Next let $\tau$ be the torsion function
\[
-\Delta\tau=1\quad\hbox{in }\Omega,\qquad \tau=0\quad\hbox{on }\partial\Omega.
\]
Let $T=\|\tau\|_{L^\infty(\Omega)}$. Since all active exponents satisfy $0<\alpha_j<1$, we may choose $M\gg1$ so that, for all $i$,
\[
M\geq\sum_{j=1}^m a_{ij}M^{\alpha_j}T^{\alpha_j}.
\]
Then
\[
\overline u_i=M\tau
\]
is a supersolution because
\[
-\Delta\overline u_i=M
\geq
\sum_j a_{ij}(M\tau)^{\alpha_j}
=\sum_j a_{ij}\overline u_j^{\alpha_j}.
\]
Furthermore, the comparison principle applied to
$\lambda_1(\Omega)\|\phi_1\|_{L^\infty(\Omega)}\tau-\phi_1$ gives
\[
\phi_1\leq\lambda_1(\Omega)\|\phi_1\|_{L^\infty(\Omega)}\tau
\qquad\hbox{in }\Omega.
\]
Taking $\delta$ smaller if necessary, we therefore have
\[
0<\underline u_i\leq\overline u_i\qquad\hbox{in }\Omega.
\]

At this stage, we have a positive subsolution and a torsion supersolution. Next, we use the Schauder fixed point theorem to establish existence.

Let
\[
X=C(\overline\Omega)^m
\]
with the product supremum norm. Define the closed convex set
\[
K=\{w\in X:\underline u_i\leq w_i\leq\overline u_i,\ i=1,\ldots,m\}.
\]
For $w\in K$, define $\mathcal T(w)=z$, where $z_i$ solves
\[
-\Delta z_i=\sum_{j=1}^m a_{ij}w_j^{\alpha_j},\qquad z_i=0\quad\hbox{on }\partial\Omega.
\]
This is a linear Poisson problem. By the monotonicity of the nonlinearities, the comparison principle for the scalar Laplacian, and the subsolution/supersolution inequalities,
\[
\underline u_i\leq z_i\leq\overline u_i.
\]
Hence $\mathcal T(K)\subset K$.

The map $\mathcal T$ is continuous in $C(\overline\Omega)^m$. Indeed, if
$w^k\to w$ uniformly and $z^k=\mathcal T(w^k)$, $z=\mathcal T(w)$, then the
right-hand sides converge uniformly, and comparison with the torsion function gives
\[
\|z_i^k-z_i\|_{L^\infty(\Omega)}
\leq \|\tau\|_{L^\infty(\Omega)}
\left\|\sum_{j=1}^m a_{ij}
\bigl((w_j^k)^{\alpha_j}-w_j^{\alpha_j}\bigr)\right\|_{L^\infty(\Omega)}
\longrightarrow0.
\]
The map is also compact. Indeed, the right-hand sides are uniformly bounded. By the global Calder\'on--Zygmund estimate on $C^{1,1}$ domains \cite{GilbargTrudinger2001}, for every fixed $1<q<\infty$,
\[
\|z_i\|_{W^{2,q}(\Omega)}\leq C_q
\]
uniformly for $w\in K$. Choosing $q>n$ and using Morrey's embedding,
\[
W^{2,q}(\Omega)\hookrightarrow C^{1,\gamma}(\overline\Omega),
\qquad \gamma=1-\frac nq>0,
\]
and then the Arzel\`a--Ascoli theorem, we see that $\mathcal T(K)$ is relatively compact in $C(\overline\Omega)^m$.

By Schauder's fixed point theorem \cite{Amann1976}, $\mathcal T$ has a fixed point $u\in K$. Then $u$ solves \eqref{eq:system}. Since $u_i\geq\underline u_i>0$ in $\Omega$, it is a positive solution. Since $q$ in the estimates above can be chosen arbitrarily large, Morrey's embedding gives
\[
u_i\in C^{1,\gamma}(\overline\Omega)
\qquad\text{for every }0<\gamma<1.
\]
Because $u_j>0$ in $\Omega$, the nonlinearities $u_j^{\alpha_j}$ are
smooth on compact subsets of $\Omega$; interior Schauder bootstrapping then
gives $u_i\in C^\infty(\Omega)$. Finally, if
$\partial\Omega\in C^{2,\sigma}$, the components are Lipschitz on
$\overline\Omega$ and
\[
\sum_{j=1}^m a_{ij}u_j^{\alpha_j}
\in C^{0,\beta_i}(\overline\Omega),\quad
\beta_i:=\min_{j:\,a_{ij}>0}\alpha_j,
\]
since $|s^{\alpha_j}-t^{\alpha_j}|\leq|s-t|^{\alpha_j}$ for
$s,t\geq0$ and every active term. The global boundary Schauder estimate
yields $u_i\in C^{2,\delta_i}(\overline\Omega)$ with
$\delta_i=\min\{\sigma,\beta_i\}$.

We next use a weighted comparison argument to establish uniqueness. For every edge $i\to j$, graph compatibility gives
\begin{equation}\label{eq:weight-strict}
\frac1{p_i}=2+\frac{\alpha_j}{p_j}>\frac{\alpha_j}{p_j}.
\end{equation}
Before stating the comparison lemma, we fix the boundary convention used below. When $v$ and $w$ are defined only in $\Omega$, we interpret
\[
v\leq w\quad\hbox{on }\partial\Omega
\]
as
\begin{equation}\label{eq:boundary-convention}
\limsup_{\Omega\ni x\to y}\bigl(v(x)-w(x)\bigr)\leq0
\qquad\hbox{for every }y\in\partial\Omega.
\end{equation}
For positive Dirichlet functions this is weaker than controlling the quotient $v/w$ near the boundary. The weighted comparison below is designed precisely to avoid extending the quotient to the boundary by normal derivatives. Instead, it uses the linear boundary estimates $v\leq Cd$ and $w\geq cd$, where $d(x)=\operatorname{dist}(x,\partial\Omega)$, and rules out a boundary maximizing sequence by a viscosity Hopf barrier. We use the standard viscosity conventions of \cite{CrandallIshiiLions1992}.

\begin{lemma}\label{lem:weighted-comparison}
Under Assumption \ref{ass:structural}, let $v=(v_1,\ldots,v_m)$ be a bounded positive upper semicontinuous viscosity subsolution of \eqref{eq:system} and let $\overline u=(\overline u_1,\ldots,\overline u_m)$ be a positive classical supersolution. Assume:
\begin{itemize}
\item[(a)] there are constants $c_i,C_i>0$ such that
\[
c_id(x)\leq\overline u_i(x)\leq C_id(x)
\]
in a boundary collar;
\item[(b)] there is a constant $C_i'>0$ such that
\[
0\leq v_i(x)\leq C_i'd(x)
\]
in a boundary collar.
\end{itemize}
Then
\[
v_i\leq\overline u_i\qquad\hbox{in }\Omega,\quad i=1,\ldots,m.
\]
\end{lemma}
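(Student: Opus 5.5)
We argue by a weighted scaling comparison. Let
\[
\Lambda:=\inf\{\lambda\geq1:\ v_i\leq\lambda^{1/p_i}\,\overline u_i\ \hbox{in }\Omega\ \hbox{for all }i\}.
\]
By (a) and (b), $v_i/\overline u_i\leq C_i'/c_i$ in a boundary collar. On the complementary compact set, $v_i$ is bounded and $\overline u_i$ is bounded below by a positive constant. Hence $\Lambda<\infty$. The goal is to show $\Lambda=1$.

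Suppose instead that $\Lambda>1$, and set $W_i:=\Lambda^{1/p_i}\overline u_i$. Using $\overline u$ as a classical supersolution, for every edge $i\to j$ we have
\[
-\Delta W_i\geq\Lambda^{1/p_i}\sum_j a_{ij}\overline u_j^{\alpha_j}
=\sum_j a_{ij}\Lambda^{1/p_i-\alpha_j/p_j}\,W_j^{\alpha_j}.
\]
By \eqref{eq:weight-strict}, the exponent satisfies $1/p_i-\alpha_j/p_j=2>0$, so $\Lambda^{1/p_i-\alpha_j/p_j}=\Lambda^2>1$. Therefore $W$ is a \emph{strict} supersolution:
\[
-\Delta W_i\geq\Lambda^2\sum_j a_{ij}W_j^{\alpha_j}.
\]
By the definition of $\Lambda$, $v_i\leq W_i$ for all $i$, and for every $\varepsilon>0$ some component $v_i$ exceeds $(\Lambda-\varepsilon)^{1/p_i}\overline u_i$ somewhere.

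Two cases then arise.

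\emph{Interior touching.} Suppose that for some $i$ and some $x_0\in\Omega$ we have $\sup_\Omega(v_i/W_i)=1$ attained at $x_0$, so $v_i(x_0)=W_i(x_0)$. Since $W_i$ is $C^2$ near $x_0$, it touches $v_i$ from above there, and the viscosity subsolution inequality gives
\[
-\Delta W_i(x_0)\leq\sum_j a_{ij}v_j(x_0)^{\alpha_j}\leq\sum_j a_{ij}W_j(x_0)^{\alpha_j},
\]
where the second inequality uses $v_j\leq W_j$ and monotonicity. This contradicts the strict inequality $-\Delta W_i\geq\Lambda^2\sum_j a_{ij}W_j^{\alpha_j}$, because the right-hand side is positive (each row has an active term and $W_j>0$). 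A cleaner variant is to take, for each $\varepsilon>0$, a slightly smaller scaling $(\Lambda-\varepsilon)$. Its supersolution gain $(\Lambda-\varepsilon)^2$ still beats $1$ when $\varepsilon$ is small. One then extracts a maximizing sequence for $\max_i\sup(v_i/W^\varepsilon_i)$, and any interior accumulation point gives a touching point after a small additive adjustment.

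\emph{Boundary maximizing sequence.} This is the main obstacle: the supremum of the quotient may be approached only along $x_k\to\partial\Omega$. Here I would use a viscosity Hopf barrier. In the collar, $W_i$ satisfies $-\Delta W_i\geq\Lambda^2 f_i$ with $f_i\geq0$, while $v_i$ satisfies $-\Delta v_i\leq f_i^v$, where $f_i^v:=\sum_j a_{ij}v_j^{\alpha_j}\leq\sum_j a_{ij}W_j^{\alpha_j}=f_i$. Hence the difference $\theta_i:=v_i-\mu W_i$, for $\mu$ slightly less than $1$, satisfies $-\Delta\theta_i\leq0$ in the viscosity sense wherever $\mu\Lambda^2\geq1$. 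The function $\theta_i$ is upper semicontinuous, vanishes on $\partial\Omega$ in the sense of \eqref{eq:boundary-convention} (both $v_i$ and $W_i$ are $O(d)$), and is positive along $x_k$ because the ratio is close to $1>\mu$. On a small ball $B$ touching $\partial\Omega$, compare $\theta_i$ with the Hopf barrier $\eta(e^{-\kappa|x-y|^2}-e^{-\kappa R^2})$ using the viscosity maximum principle for subharmonic functions. This forces $\theta_i$ to lie below a multiple of $d$ with a strictly positive margin, i.e.\ $v_i\leq(\mu+\eta')W_i$ near the boundary point, uniformly along the sequence.

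Combining the interior margin obtained by compactness with this boundary margin from the barrier, we get $v_i\leq(1-\eta'')W_i$ everywhere. This means $\Lambda$ can be decreased, contradicting its minimality. Hence $\Lambda=1$, that is, $v_i\leq\overline u_i$ for all $i$.
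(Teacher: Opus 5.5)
Your scaling set-up and interior step are correct and match the paper's. On every active edge, $W_i=\Lambda^{1/p_i}\overline u_i$ gains the factor $\Lambda^{1/p_i-\alpha_j/p_j}=\Lambda^2>1$. The infimum defining $\Lambda$ is attained, so $v_i\le W_i$, and an interior contact contradicts the strict supersolution inequality.

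The gap is in the boundary step, which you rightly call the main obstacle. You run the Hopf barrier on $\theta_i=v_i-\mu W_i$ with $\mu<1$. A barrier comparison $\theta_i\le-\varepsilon\beta$ on an annulus $B_R(z)\setminus\overline{B_{R-\rho}(z)}$ needs $\theta_i\le0$ on the outer sphere, and nothing you have established gives that. Near the tangency point the ratio $v_i/W_i$ may be close to $1>\mu$, exactly as at the points $x_k$ where you yourself note $\theta_i(x_k)>0$. For the same reason the classical Hopf lemma does not apply: the boundary point is not a maximum of $\theta_i$. So the claimed bound $v_i\le(\mu+\eta')W_i$ near the boundary is not derived. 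Its uniformity along the sequence is also asserted with no source.

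The repair is to take $\mu=1$, which is what the paper does with $w=v_{i_0}-t_0^{1/p_{i_0}}\overline u_{i_0}$. Then:
\begin{itemize}
\item $\theta_i=v_i-W_i\le0$ in $\Omega$, and it is viscosity subharmonic because $\Lambda^2\ge1$.
\item Your interior step shows $\theta_i<0$ everywhere in $\Omega$, so by upper semicontinuity $\max_{\{d\ge\rho\}}\theta_i<0$.
\item With a uniform interior-ball radius $R$ from the $C^{1,1}$ boundary, all inner spheres $\partial B_{R-\rho}(z)$ lie in $\{d\ge\rho\}$. Hence the barrier $\varepsilon(e^{-\kappa|x-z|^2}-e^{-\kappa R^2})$ works with $\varepsilon,\kappa$ independent of the boundary point. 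This is the missing source of uniformity.
\item The comparison gives $\theta_i\le-c\,d$ in a collar. Since $W_i\le\Lambda^{1/p_i}C_i\,d$ by (a), this means $v_i\le(1-c')W_i$ there.
\item Combined with the compact interior margin, this lowers $\Lambda$, a contradiction.
\end{itemize}
With this correction your proof is the paper's, organized as ``decrease $\Lambda$''. The paper instead contradicts $w(x_k)/d(x_k)\to0$ along a maximizing sequence with the barrier bound $w(x_k)\le-\varepsilon c_0\,d(x_k)$.
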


\begin{proof}
The linear boundary estimates imply that the weighted quotients
\[
Q_i(x)=\left(\frac{v_i(x)}{\overline u_i(x)}\right)^{p_i}
\]
are bounded in $\Omega$. Set
\[
t_0=\max_i\sup_\Omega Q_i<\infty.
\]
Then
\begin{equation}\label{eq:weighted-upper}
v_i\leq t_0^{1/p_i}\overline u_i\qquad\hbox{in }\Omega
\end{equation}
for every $i$. If $t_0\leq1$, the conclusion follows. Suppose, toward a contradiction, that $t_0>1$.

We first exclude an interior contact. If, for some component $i_0$, equality in \eqref{eq:weighted-upper} occurs at an interior point $x_0$, then
\[
\varphi(x)=t_0^{1/p_{i_0}}\overline u_{i_0}(x)
\]
touches $v_{i_0}$ from above at $x_0$. The viscosity subsolution property gives
\[
-\Delta\varphi(x_0)\leq\sum_j a_{i_0j}v_j(x_0)^{\alpha_j}.
\]
Since $\overline u$ is a supersolution,
\[
-\Delta\varphi(x_0)
=t_0^{1/p_{i_0}}\bigl(-\Delta\overline u_{i_0}\bigr)(x_0)
\geq
t_0^{1/p_{i_0}}\sum_j a_{i_0j}\overline u_j(x_0)^{\alpha_j}.
\]
Using \eqref{eq:weighted-upper},
\[
v_j(x_0)^{\alpha_j}\leq t_0^{\alpha_j/p_j}\overline u_j(x_0)^{\alpha_j}.
\]
Therefore
\[
\sum_j a_{i_0j}\bigl(t_0^{1/p_{i_0}}-t_0^{\alpha_j/p_j}\bigr)
\overline u_j(x_0)^{\alpha_j}\leq0,
\]
which is impossible because $t_0>1$, $\frac1{p_{i_0}}>\frac{\alpha_j}{p_j}$ on every active edge $i_0\to j$, each row has an active edge, and $\overline u_j>0$ in $\Omega$.

It remains to rule out the possibility that the supremum is approached only at the boundary. Since each $Q_i$ is upper semicontinuous in $\Omega$, a maximizing sequence contained in a compact subset of $\Omega$ would yield an interior contact, which has already been excluded. Thus, after passing to a subsequence, there are a component $i_0$, points $x_k\in\Omega$, and $y_0\in\partial\Omega$ such that
\[
x_k\to y_0,\qquad Q_{i_0}(x_k)\to t_0.
\]
Set
\[
w=v_{i_0}-t_0^{1/p_{i_0}}\overline u_{i_0}.
\]
Then $w\leq0$ in $\Omega$. Since
\[
Q_{i_0}(x_k)^{1/p_{i_0}}
=\frac{v_{i_0}(x_k)}{\overline u_{i_0}(x_k)}
\to t_0^{1/p_{i_0}},
\]
we have
\[
\frac{w(x_k)}{d(x_k)}
=
\left(Q_{i_0}(x_k)^{1/p_{i_0}}-t_0^{1/p_{i_0}}\right)
\frac{\overline u_{i_0}(x_k)}{d(x_k)}.
\]
The first factor tends to $0$, while the second factor is bounded by the boundary estimate $\overline u_{i_0}\leq C_id$. Hence
\begin{equation}\label{eq:boundary-slope-zero}
\lim_{k\to\infty}\frac{w(x_k)}{d(x_k)}=0.
\end{equation}
This is the precise meaning of the boundary maximizing sequence: after subtracting the critical multiple $t_0^{1/p_{i_0}}\overline u_{i_0}$, the remaining function has zero first-order boundary slope along the sequence $x_k$.

We claim that $\Delta w\geq0$ in the viscosity sense in $\Omega$. If $\psi$ touches $w$ from above at an interior point, then
\[
\psi+t_0^{1/p_{i_0}}\overline u_{i_0}
\]
touches $v_{i_0}$ from above. Therefore
\[
-\Delta\psi
\leq
\sum_j a_{i_0j}\left(v_j^{\alpha_j}-t_0^{1/p_{i_0}}\overline u_j^{\alpha_j}\right)
\leq
\sum_j a_{i_0j}\left(t_0^{\alpha_j/p_j}-t_0^{1/p_{i_0}}\right)
\overline u_j^{\alpha_j}
\leq0.
\]
Thus $\Delta w\geq0$ in the viscosity sense. We now record the two boundary facts needed to rule out a maximizing sequence approaching $\partial\Omega$.

First, $w$ is not identically zero in any boundary cap centered at $y_0$. Indeed, suppose by contradiction that for some $r>0$,
\[
w\equiv0\qquad\hbox{in }D_r:=\Omega\cap B_r(y_0).
\]
Then
\[
v_{i_0}=t_0^{1/p_{i_0}}\overline u_{i_0}\qquad\hbox{in }D_r.
\]
In the viscosity sense, the function $t_0^{1/p_{i_0}}\overline u_{i_0}$ touches $v_{i_0}$ from above at every interior point of $D_r$. Hence the subsolution inequality and the supersolution inequality give
\[
t_0^{1/p_{i_0}}\sum_j a_{i_0j}\overline u_j^{\alpha_j}
\leq
\sum_j a_{i_0j}v_j^{\alpha_j}.
\]
Using \eqref{eq:weighted-upper},
\[
\sum_j a_{i_0j}v_j^{\alpha_j}
\leq
\sum_j a_{i_0j}t_0^{\alpha_j/p_j}\overline u_j^{\alpha_j}.
\]
Consequently,
\[
\sum_j a_{i_0j}\bigl(t_0^{1/p_{i_0}}-t_0^{\alpha_j/p_j}\bigr)
\overline u_j^{\alpha_j}\leq0
\qquad\hbox{in }D_r.
\]
This is impossible, because every row has at least one active edge, $\overline u_j>0$ in $\Omega$, $t_0>1$, and $\frac1{p_{i_0}}>\frac{\alpha_j}{p_j}$ on every active edge $i_0\to j$. Thus $w$ is not identically zero in any boundary cap centered at $y_0$.

We now apply a uniform Hopf barrier directly to the maximizing sequence. Since $w$ is upper semicontinuous, $w\leq0$, and $w$ is not identically zero, the viscosity strong maximum principle gives
\[
w<0\qquad\hbox{in }\Omega.
\]
Because $\partial\Omega$ is $C^{1,1}$, there is a uniform tubular neighborhood and a radius $R>0$ for the interior ball condition. For all sufficiently large $k$, let $y_k\in\partial\Omega$ be the nearest boundary point to $x_k$. Then
\[
x_k=y_k-d(x_k)\nu(y_k),
\]
where $\nu$ is the exterior unit normal. Set
\[
z_k=y_k-R\nu(y_k),\qquad B_R(z_k)\subset\Omega.
\]
Fix $0<\rho<R/2$ and set
\[
K_\rho:=\overline{\bigcup_{k\geq k_0}\partial B_{R-\rho}(z_k)}
\]
for $k_0$ sufficiently large. Since $y_k\to y_0$, the set $K_\rho$ is compact. Moreover, if
$x\in\partial B_{R-\rho}(z_k)$, then
$B_\rho(x)\subset B_R(z_k)\subset\Omega$, and hence
$d(x,\partial\Omega)\geq\rho$. Thus
$K_\rho\subset\{x\in\Omega:d(x,\partial\Omega)\geq\rho\}$. The upper
semicontinuity and strict negativity of $w$ therefore give a constant
$\eta>0$, independent of $k$, such that
\[
w\leq-\eta\qquad\hbox{on }\partial B_{R-\rho}(z_k)
\]
for all $k\geq k_0$.

In the annulus
\[
A_{k,\rho}=B_R(z_k)\setminus\overline{B_{R-\rho}(z_k)},
\]
set
\[
\beta_k(x)=e^{-\lambda|x-z_k|^2}-e^{-\lambda R^2}.
\]
Choose $\lambda>n/(2(R-\rho)^2)$. Then $\beta_k=0$ on $\partial B_R(z_k)$, $\beta_k>0$ in $A_{k,\rho}$, and
\[
\Delta\beta_k
=
\left(4\lambda^2|x-z_k|^2-2n\lambda\right)e^{-\lambda|x-z_k|^2}>0
\qquad\hbox{in }A_{k,\rho}.
\]
The value of $\beta_k$ on the inner spherical boundary is independent of $k$. Hence one can choose $\varepsilon>0$, also independent of $k$, such that $w+\varepsilon\beta_k\leq0$ on the inner spherical boundary. On the outer spherical boundary, the same inequality holds pointwise at points in $\Omega$ and in the limsup sense \eqref{eq:boundary-convention} at points in $\partial\Omega$, because the boundary estimates imply that both $v_{i_0}$ and $\overline u_{i_0}$ tend to zero there. Thus
\[
w+\varepsilon\beta_k\leq0\qquad\hbox{on }\partial A_{k,\rho}
\]
in this boundary sense.
If $w+\varepsilon\beta_k$ had a positive interior maximum at $x_*$, then
\[
\varphi(x)=-\varepsilon\beta_k(x)+(w+\varepsilon\beta_k)(x_*)
\]
would touch $w$ from above at $x_*$. The viscosity inequality $\Delta w\geq0$ would give
\[
0\leq\Delta\varphi(x_*)=-\varepsilon\Delta\beta_k(x_*)<0,
\]
a contradiction. Therefore $w+\varepsilon\beta_k\leq0$ in $A_{k,\rho}$.

For all sufficiently large $k$, $d(x_k)<\rho$ and
$|x_k-z_k|=R-d(x_k)$. By the mean value theorem, there is a constant
$c_0>0$, independent of $k$, such that
\[
\beta_k(x_k)
=e^{-\lambda(R-d(x_k))^2}-e^{-\lambda R^2}
\geq c_0d(x_k).
\]
Consequently,
\[
\frac{w(x_k)}{d(x_k)}\leq-\varepsilon c_0<0,
\]
which contradicts \eqref{eq:boundary-slope-zero}. Hence the boundary alternative is impossible. Therefore $t_0\leq1$, and the comparison follows.
\end{proof}

\begin{remark}\label{rem:boundary-condition-use}
Lemma \ref{lem:weighted-comparison} does not require one to define the quotient $v_i/\overline u_i$ on $\partial\Omega$. Assumptions (a)--(b) imply that both functions tend to zero at the boundary, and hence the associated boundary comparison holds in the limsup sense \eqref{eq:boundary-convention}. The possible loss of control of the quotient near $\partial\Omega$ is handled by the linear estimates $v_i\leq Cd$ and $\overline u_i\geq cd$. If the weighted quotient tries to attain its maximum at the boundary, the calculation \eqref{eq:boundary-slope-zero} says that the associated difference $w$ has zero distance-normalized slope along a boundary sequence, while the uniform viscosity Hopf barriers force $w(x_k)\leq-cd(x_k)$ along that sequence. This contradiction replaces the classical normal-derivative quotient argument.
\end{remark}

For two positive solutions of \eqref{eq:system}, the boundary linear estimates follow from the standard boundary barrier and Hopf lemma; see \cite{GilbargTrudinger2001,Dalmasso2000,Pao1992}. Applying Lemma \ref{lem:weighted-comparison} in both directions gives uniqueness.

\subsection{Power concavity}

Let $u$ be the unique positive solution constructed above and set
\[
U_i=u_i^{p_i}.
\]
By \eqref{eq:transformed-system},
\[
-\Delta U_i=F_i(U,\nabla U_i).
\]
For a nonnegative function $w\in C(\overline\Omega)$, define its concave
envelope by
\[
w^*(x)=\sup\left\{
\sum_{\ell=0}^n\lambda_\ell w(x_\ell):
x_\ell\in\overline\Omega,\ \lambda_\ell\geq0,\
\sum_{\ell=0}^n\lambda_\ell=1,\
x=\sum_{\ell=0}^n\lambda_\ell x_\ell
\right\}.
\]
It is the least concave majorant of $w$.We record the contact property used
below, which is the elliptic counterpart of the envelope contact arguments in
\cite{Korevaar1983,Kennington1985,Salani2012,IshigeNakagawaSalani2016}.

\begin{lemma}\label{lem:envelope-contact}
Let $\Omega$ be a bounded convex domain with $C^{1,1}$ boundary, and let
$w\in C(\overline\Omega)$ be nonnegative, with $w=0$ on $\partial\Omega$.
If
\[
\frac{w(x)}{d(x,\partial\Omega)}\longrightarrow+\infty
\qquad\text{as }x\to\partial\Omega,
\]
then, for every $x_0\in\Omega$, $w^*(x_0)$ admits an attained
representation whose active points lie in $\Omega$, are contact points of
$w$ and $w^*$, and share a common supporting hyperplane. At differentiable
contact points, their gradients are equal.
\end{lemma}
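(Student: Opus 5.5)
The proof proceeds in three stages. First, the representation is attained. Second, the boundary is excluded by a blow-up argument. Third, contact and the common supporting hyperplane follow from concavity.

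\emph{Attainment.} Fix $x_0\in\Omega$. The set of admissible tuples $(x_0,\dots,x_n;\lambda_0,\dots,\lambda_n)\in\overline\Omega^{\,n+1}\times\Delta_n$ with $\sum\lambda_\ell x_\ell=x_0$ is compact. The map $(x_\ell,\lambda_\ell)\mapsto\sum\lambda_\ell w(x_\ell)$ is continuous because $w\in C(\overline\Omega)$. By Carath\'eodory's theorem, $n+1$ points suffice in the definition of $w^*$. Hence the supremum defining $w^*(x_0)$ is attained. Call a point $x_\ell$ \emph{active} if $\lambda_\ell>0$.

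\emph{Active points avoid the boundary.} Suppose an active point $x_1$ lies on $\partial\Omega$, so $w(x_1)=0$. I would perturb it inward while keeping the barycenter fixed. Pick another active point, or use $x_0$ itself, which is interior. For small $t>0$, replace $x_1$ by $x_1^t=x_1+t(x_0-x_1)$. To preserve the barycenter, redistribute weight: write $x_1=\frac{1}{1-t}x_1^t-\frac{t}{1-t}x_0$. Concretely, I would compare the tuple $(x_1,\lambda_1)$ with the tuple $\bigl(x_1^t,\lambda_1/(1-t)\bigr)$, rescaling all other weights by a common factor so that the total weight is $1$ and the barycenter is $x_0$. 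This is the usual ``moving a vertex toward the barycenter'' trick.

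The gain is $\frac{\lambda_1}{1-t}w(x_1^t)$. Since $\Omega$ is convex with $C^{1,1}$ boundary and $x_0$ is interior, $d(x_1^t)\ge c\,t$. The hypothesis $w/d\to\infty$ then gives $w(x_1^t)\ge K\,t$ for arbitrarily large $K$ once $t$ is small. The loss from rescaling the other weights is at most $O(t)\,\|w\|_\infty$ with a fixed constant. Therefore, for $t$ small, the new value strictly exceeds $w^*(x_0)$, a contradiction. Hence all active points are interior.

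This weight-bookkeeping step is the main obstacle. One must check that the competitor is an admissible convex combination, with nonnegative weights summing to $1$ and barycenter exactly $x_0$. One must also check that the $O(t)$ loss constant is independent of $K$. I would first try the cleanest version: view $x_0$ as a convex combination of $x_1$ and the point $y=\frac{1}{1-\lambda_1}\sum_{\ell\ne1}\lambda_\ell x_\ell$. Slide $x_1$ toward $y$ along the segment, and reweight only the two-point combination of $x_1$ and $y$. Then use concavity of $w^*$ at $y$, namely $w^*(y)\ge\sum_{\ell\ne1}\frac{\lambda_\ell}{1-\lambda_1}w(x_\ell)$. This reduces the problem to a one-dimensional computation along a segment through an interior point.

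\emph{Contact and common hyperplane.} Each active point satisfies $w(x_\ell)=w^*(x_\ell)$. Otherwise, replacing $w(x_\ell)$ by a near-optimal representation of $w^*(x_\ell)$ would increase the sum beyond $w^*(x_0)$, since $w^*$ is the least concave majorant. Next, take a supporting affine function $L\ge w^*$ of the concave function $w^*$ at the interior point $x_0$, with $L(x_0)=w^*(x_0)$. Then
\[
w^*(x_0)=\sum\lambda_\ell w(x_\ell)\le\sum\lambda_\ell L(x_\ell)=L(x_0),
\]
with equality throughout. Since every $\lambda_\ell>0$, this forces $L(x_\ell)=w(x_\ell)=w^*(x_\ell)$ for each active point, so $L$ supports $w^*$, and hence $w$ from above, at every active point.

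Finally, at a contact point $x_\ell$ where $w$ is differentiable, $w-L\le0$ attains its maximum $0$ at the interior point $x_\ell$. Therefore $\nabla w(x_\ell)=\nabla L$, and all such gradients coincide.
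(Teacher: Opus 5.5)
Your proof is correct. It differs from the paper's only in how boundary active points are excluded.

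The paper works in the opposite order. It first takes a supporting affine function $L\ge w^*$ at $x_0$ and shows, exactly as in your last step, that $L=w=w^*$ at every active point. Then, if an active point $y$ lay on $\partial\Omega$, one would have $L(y)=0$. The affine function $L$ grows at most linearly along $y-t\nu(y)$, whereas $w(y-t\nu(y))\ge Kt$ for every $K$. This contradicts $L\ge w$.

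You instead show directly that a boundary active point makes the representation non-optimal, by sliding it inward and reweighting. Your two-point version closes cleanly. With $x_1^t=x_1+t(y-x_1)$ and $\mu=\lambda_1/(1-t)$, concavity gives $w^*(x_0)\ge\mu w(x_1^t)+(1-\mu)w^*(y)$. Since $w(x_1)=0$, you also have $w^*(x_0)\le(1-\lambda_1)w^*(y)$. Subtracting, you get a contradiction as soon as $w(x_1^t)>t\,w^*(y)$. This holds for small $t$, because $x_1^t=x_1+\frac{t}{1-\lambda_1}(x_0-x_1)$ gives $d(x_1^t)\ge\frac{t}{1-\lambda_1}d(x_0)$ by convexity alone.

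In your first variant, with $x_1^t=x_1+t(x_0-x_1)$ and a common rescaling of the other weights, the new weight has to be $\lambda_1/(1-(1-\lambda_1)t)$ rather than $\lambda_1/(1-t)$. This slip is harmless.

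As for what each route buys: the paper's ordering is more economical, since the hyperplane $L$ is needed anyway for the contact and gradient conclusions, and it avoids all weight bookkeeping. Your route is a self-contained variational argument. Because you move toward $x_0$ rather than along the normal, it uses only convexity of $\Omega$, not the $C^{1,1}$ interior-ball property that the paper invokes for $d(y-t\nu(y))=t$. The paper's argument would, however, work equally well with the direction $x_0-y$.
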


\begin{proof}
Compactness gives an attained representation
\[
x_0=\sum_\ell\lambda_\ell x_\ell,
\qquad
w^*(x_0)=\sum_\ell\lambda_\ell w(x_\ell).
\]
Let $L$ be a supporting affine function of $w^*$ at $x_0$. Since $w^*$ is
concave and $L\geq w^*$,
\[
w^*(x_0)=\sum_\ell\lambda_\ell w(x_\ell)
\leq\sum_\ell\lambda_\ell w^*(x_\ell)
\leq w^*(x_0)
\]
and
\[
L(x_0)=\sum_\ell\lambda_\ell L(x_\ell)
\geq\sum_\ell\lambda_\ell w^*(x_\ell)=w^*(x_0).
\]
Hence every active $x_\ell$ satisfies
$L(x_\ell)=w^*(x_\ell)=w(x_\ell)$.

If an active point were $y\in\partial\Omega$, then $L(y)=w(y)=0$. For
small $t>0$, $y-t\nu(y)\in\Omega$ and
$d(y-t\nu(y),\partial\Omega)=t$. Thus
$L(y-t\nu(y))=O(t)$, while
$w(y-t\nu(y))/t\to+\infty$, contradicting $L\geq w^*\geq w$.
Therefore all active points are interior. The gradient assertion follows
because $L$ touches $w$ from above at each differentiable contact point.
\end{proof}

Let $\Gamma_i$ be the concave envelope of $U_i$. Hopf's lemma and a boundary
barrier give
\[
c_i d(x)\leq u_i(x)\leq C_i d(x)
\]
in a boundary collar. Since $0<p_i<1$,
\[
\frac{U_i(x)}{d(x)}\longrightarrow+\infty
\qquad\text{as }x\to\partial\Omega,
\]
so Lemma \ref{lem:envelope-contact} applies.

Fix $i$ and $x_0\in\Omega$. After discarding zero-weight terms, choose
$N\leq n$, $x_\ell\in\Omega$, and $\lambda_\ell>0$ such that
\[
x_0=\sum_{\ell=0}^N\lambda_\ell x_\ell,
\qquad
\Gamma_i(x_0)=\sum_{\ell=0}^N\lambda_\ell U_i(x_\ell),
\qquad
\sum_{\ell=0}^N\lambda_\ell=1.
\]
At the contact points,
\begin{equation}\label{eq:common-gradient}
\nabla U_i(x_\ell)=\theta,
\qquad \ell=0,\ldots,N.
\end{equation}
Set
\[
F_\ell=F_i(U(x_\ell),\theta),
\qquad
a_\ell=\frac{F_\ell^{-1}}
{\sum_{r=0}^N\lambda_rF_r^{-1}}.
\]
Then $F_\ell>0$ and $\sum_{\ell=0}^N\lambda_\ell a_\ell=1$. Define near
$x_0$
\[
\varphi(x)=\sum_{\ell=0}^N\lambda_\ell
U_i\bigl(x_\ell+a_\ell(x-x_0)\bigr).
\]
Since
\[
\sum_{\ell=0}^N\lambda_\ell
\bigl(x_\ell+a_\ell(x-x_0)\bigr)=x,
\]
we have $\varphi\leq\Gamma_i$ and
$\varphi(x_0)=\Gamma_i(x_0)$. Moreover,
\[
\nabla\varphi(x_0)=\theta,
\qquad
-\Delta\varphi(x_0)
=\left(\sum_{\ell=0}^N\lambda_\ell F_\ell^{-1}\right)^{-1}.
\]
By Proposition \ref{prop:minus-one-concavity},
\[
-\Delta\varphi(x_0)
\leq
F_i\left(\sum_{\ell=0}^N\lambda_\ell U(x_\ell),\theta\right).
\]
Since
\[
\Gamma_j(x_0)\geq\sum_{\ell=0}^N\lambda_\ell U_j(x_\ell)
\qquad (j\neq i),
\]
and $F_i$ is nondecreasing in these components,
\begin{equation}\label{eq:lower-test-envelope}
-\Delta\varphi(x_0)
\leq F_i(\Gamma(x_0),\nabla\varphi(x_0)).
\end{equation}
If $\psi\in C^2$ touches $\Gamma_i$ from above at $x_0$, then
$\varphi\leq\Gamma_i\leq\psi$ with equality at $x_0$. Hence
\[
\nabla\psi(x_0)=\nabla\varphi(x_0),
\qquad
D^2\psi(x_0)\geq D^2\varphi(x_0),
\]
and \eqref{eq:lower-test-envelope} gives
\[
-\Delta\psi(x_0)
\leq F_i(\Gamma(x_0),\nabla\psi(x_0)).
\]
Thus $\Gamma$ is a viscosity subsolution of the transformed system.

\begin{lemma}\label{lem:monotone-change}
Let $\Gamma=(\Gamma_1,\ldots,\Gamma_m)$, with every $\Gamma_i>0$, be a
viscosity subsolution of
\[
-\Delta\Gamma_i\leq F_i(\Gamma,\nabla\Gamma_i).
\]
Define
\[
\widehat u_i=\Gamma_i^{1/p_i},\qquad i=1,\ldots,m.
\]
Then $\widehat u=(\widehat u_1,\ldots,\widehat u_m)$ is a viscosity subsolution of \eqref{eq:system}.
\end{lemma}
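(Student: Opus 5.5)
The plan is to push test functions through the monotone change of variables $g_i(s)=s^{1/p_i}$ and its inverse $g_i^{-1}(t)=t^{p_i}$. Both are smooth, strictly increasing diffeomorphisms of $(0,\infty)$, so touching from above is preserved. Fix $i$ and an interior point $x_0$, and let $\psi\in C^2$ touch $\widehat u_i$ from above at $x_0$. Since $\widehat u_i(x_0)=\Gamma_i(x_0)^{1/p_i}>0$, we have $\psi>0$ in a neighborhood of $x_0$. Set $\Psi=\psi^{p_i}$. Because $t\mapsto t^{p_i}$ is increasing, $\Psi$ is $C^2$ near $x_0$ and touches $\Gamma_i=\widehat u_i^{\,p_i}$ from above at $x_0$. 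The subsolution hypothesis then gives
\[
-\Delta\Psi(x_0)\leq F_i(\Gamma(x_0),\nabla\Psi(x_0)).
\]

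Next I would undo the computation leading to \eqref{eq:transformed-system}, applied to the pair $\psi=\Psi^{1/p_i}$. That derivation was a pointwise algebraic identity valid for any positive $C^2$ function. Writing $\Psi$ in the role of $U_i$, it reads
\[
-\Delta\psi=\frac1{p_i}\Psi^{1/p_i-1}\Bigl(-\Delta\Psi-\frac{1-p_i}{p_i}\frac{|\nabla\Psi|^2}{\Psi}\Bigr).
\]
Since $\Psi(x_0)=\Gamma_i(x_0)$, substituting the inequality above and using the definition \eqref{eq:F-def} cancels the gradient term exactly. Multiplying by the positive factor $\frac1{p_i}\Psi^{1/p_i-1}$ preserves the inequality, and with $1-1/p_i=-1-s_i$ and $\alpha_j=s_ip_j$ we get
\[
-\Delta\psi(x_0)\leq\sum_{j:i\to j}a_{ij}\Gamma_j(x_0)^{\alpha_j/p_j}=\sum_j a_{ij}\widehat u_j(x_0)^{\alpha_j}.
\]
This is precisely the viscosity subsolution inequality for \eqref{eq:system}.

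It also has to be checked that the inactive terms ($a_{ij}=0$) contribute nothing. This holds because only active edges appear in $F_i$, and on active edges we have $\alpha_j/p_j=s_i$.

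Some regularity bookkeeping is needed as well. Each $\widehat u_i$ is upper semicontinuous and positive, since $\Gamma_i$ is and the map $t\mapsto t^{1/p_i}$ is continuous and increasing. If the viscosity definition in use allows test functions that touch only locally, or that touch strictly, the same monotonicity argument covers those variants.

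The only delicate point is ensuring $\psi>0$ near $x_0$, so that $\Psi=\psi^{p_i}$ is $C^2$. This follows from $\psi(x_0)=\widehat u_i(x_0)>0$ and continuity, so I expect no real obstacle. The lemma is essentially the observation that the algebraic transformation in Section 2 is reversible at the level of $C^2$ test functions.
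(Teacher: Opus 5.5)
Your proof is correct and matches the paper's argument: you transfer the test function via $\Psi=\psi^{p_i}$, apply the subsolution inequality for $\Gamma_i$ at $x_0$, and use $1-1/p_i=-1-s_i$ and $s_ip_j=\alpha_j$ to cancel the gradient term, which yields $-\Delta\psi(x_0)\leq\sum_j a_{ij}\widehat u_j(x_0)^{\alpha_j}$. The only difference is cosmetic: you invert the Section~2 identity written in terms of $\Psi$, whereas the paper expands $-\Delta(\psi^{p_i})$ and $F_i(\Gamma,\nabla(\psi^{p_i}))$ directly in terms of $\psi$.
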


\begin{proof}
Let $\psi\in C^2$ touch $\widehat u_i$ from above at $x_0$. Locally
$\psi>0$, and $\psi^{p_i}$ touches $\Gamma_i$ from above at $x_0$. At this
point,
\[
-\Delta(\psi^{p_i})
=-p_i\psi^{p_i-1}\Delta\psi
+p_i(1-p_i)\psi^{p_i-2}|\nabla\psi|^2,
\]
while $s_ip_j=\alpha_j$ and $p_i(1+s_i)=1-p_i$ give
\[
F_i\bigl(\Gamma,\nabla(\psi^{p_i})\bigr)
=p_i(1-p_i)\psi^{p_i-2}|\nabla\psi|^2
+p_i\psi^{p_i-1}\sum_j a_{ij}\widehat u_j^{\alpha_j}.
\]
The viscosity inequality for $\Gamma_i$ therefore yields
\[
-\Delta\psi(x_0)
\leq\sum_j a_{ij}\widehat u_j(x_0)^{\alpha_j}.
\]
\end{proof}

Set $\widehat u_i=\Gamma_i^{1/p_i}$. Since
$0\leq\Gamma_i\leq\|U_i\|_{L^\infty(\Omega)}$ and $\Gamma_i$ is concave,
$\widehat u$ is bounded and continuous in $\Omega$. By Lemma
\ref{lem:monotone-change}, it is a viscosity subsolution of
\eqref{eq:system}, and $\Gamma_i\geq U_i$ gives
\[
\widehat u_i\geq u_i.
\]

It remains to verify the boundary estimates in Lemma
\ref{lem:weighted-comparison}. Enlarging $C_i$ if necessary, the estimate
$u_i\leq C_i d$ holds throughout $\Omega$. Hence
\[
U_i\leq C_i^{p_i}d^{p_i}.
\]
The function $d^{p_i}$ is concave because $\Omega$ is convex and
$0<p_i<1$. By the minimality of the concave envelope,
\[
\Gamma_i\leq C_i^{p_i}d^{p_i},
\qquad
\widehat u_i\leq C_i d.
\]
Together with the lower estimate $u_i\geq c_i d$ near the boundary, Lemma
\ref{lem:weighted-comparison} gives $\widehat u_i\leq u_i$ in $\Omega$.
Thus $\widehat u_i=u_i$ and $\Gamma_i=U_i$. Therefore $U_i$ is concave,
and Theorem \ref{thm:existence-power} follows.

\section{Constant Rank and Strict Power Concavity}

\subsection{The constant-rank theorem}

We first recall the two convexity tools used in the microscopic argument.

\begin{lemma}\label{lem:all}
The function
\[
A\mapsto \frac1{\operatorname{tr}(A^{-1})}
\]
is concave on the cone $S_{++}^n$ of positive definite symmetric matrices.
\end{lemma}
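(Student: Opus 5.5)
We need to prove that $f(A)=1/\operatorname{tr}(A^{-1})$ is concave on $S_{++}^n$. The plan is to reduce this to the concavity of the harmonic mean on positive vectors, via a variational characterization of $\operatorname{tr}(A^{-1})$ in terms of unit vectors.

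The key step is the identity
\[
\frac{1}{\operatorname{tr}(A^{-1})}
=\inf\Bigl\{\sum_{k=1}^n \frac{1}{\,\sum_{l=1}^n (\langle Ae_l,e_l\rangle)^{-1}\,}\Bigr\}
\]
in a suitable form. The cleanest route uses the scalar fact that for a positive definite $A$ and a unit vector $e$,
\[
\frac{1}{\langle A^{-1}e,e\rangle}=\inf_{x:\langle x,e\rangle=1}\langle Ax,x\rangle .
\]
This follows by Lagrange multipliers (the minimizer is $x=A^{-1}e/\langle A^{-1}e,e\rangle$) or from the Cauchy--Schwarz inequality $1=\langle x,e\rangle^2\le\langle Ax,x\rangle\langle A^{-1}e,e\rangle$.

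Summing over an orthonormal basis gives $\operatorname{tr}(A^{-1})=\sum_k\langle A^{-1}e_k,e_k\rangle$. I would then write
\[
\operatorname{tr}(A^{-1})=\sum_k \frac{1}{g_k(A)},\qquad g_k(A):=\inf_{\langle x,e_k\rangle=1}\langle Ax,x\rangle .
\]
Each $g_k$ is an infimum of linear functions of $A$, so it is concave and positive on $S_{++}^n$. Hence $f(A)=H(g_1(A),\dots,g_n(A))$, where
\[
H(t)=\Bigl(\sum_k t_k^{-1}\Bigr)^{-1}
\]
is $1/n$ times the harmonic mean of $t\in(0,\infty)^n$.

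The harmonic mean $H$ is concave, positively $1$-homogeneous, and nondecreasing in each $t_k$. Concavity can be seen from homogeneity together with quasi-concavity, or directly: the Hessian of $H$ is negative semidefinite, which is the classical inequality $\sum_k (a_k+b_k)^{-1}{}^{-1}\ge\bigl(\sum a_k^{-1}\bigr)^{-1}+\bigl(\sum b_k^{-1}\bigr)^{-1}$. That inequality follows from Cauchy--Schwarz, and I expect it to be the only computational point.

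A nondecreasing concave function composed with concave functions is concave. Therefore $f$ is concave on $S_{++}^n$.

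An alternative I would keep in reserve is direct differentiation. One has $D\operatorname{tr}(A^{-1})[B]=-\operatorname{tr}(A^{-1}BA^{-1})$ and $D^2\operatorname{tr}(A^{-1})[B,B]=2\operatorname{tr}(A^{-1}BA^{-1}BA^{-1})$. Concavity of $1/\phi$ with $\phi=\operatorname{tr}(A^{-1})$ is equivalent to $\phi\, D^2\phi\ge 2(D\phi)^2$, i.e.
\[
\operatorname{tr}(A^{-1})\operatorname{tr}(A^{-1}BA^{-1}BA^{-1})\ge\bigl(\operatorname{tr}(A^{-1}BA^{-1})\bigr)^2 .
\]
This is Cauchy--Schwarz for the Hilbert--Schmidt inner product applied to $A^{-1/2}$ and $A^{-1/2}BA^{-1}$. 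This route is actually shorter, and I would likely present it as the proof, using the variational argument only as a check.
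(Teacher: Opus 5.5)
Your proof is correct. The paper gives no argument for this lemma and only attributes it to Alvarez--Lasry--Lions, so either of your routes makes the section self-contained.

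\textbf{The second-derivative route} is complete. Along $t\mapsto A+tB$ one has $(1/\phi)''=\bigl(2(\phi')^2-\phi\phi''\bigr)/\phi^3$, and your formulas for $\phi'$ and $\phi''$ are right. Take $X=A^{-1/2}$, $Y=A^{-1/2}BA^{-1}$ and $\langle X,Y\rangle=\operatorname{tr}(X^TY)$. Then $\langle X,Y\rangle=\operatorname{tr}(A^{-1}BA^{-1})$, $\|X\|^2=\operatorname{tr}(A^{-1})$ and $\|Y\|^2=\operatorname{tr}(A^{-1}BA^{-1}BA^{-1})$, using $B=B^T$. Cauchy--Schwarz is then exactly $\phi\phi''\ge 2(\phi')^2$. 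Since $S_{++}^n$ is open and convex, concavity along every segment gives concavity. This is the shortest proof and the right one to present.

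\textbf{The variational route} is also valid. Each $g_k(A)=1/\langle A^{-1}e_k,e_k\rangle$ is an infimum of linear functions, hence concave and positive. The function $H$ is concave and coordinatewise nondecreasing on $(0,\infty)^n$, so $H\circ g$ is concave. What this route buys is a derivative-free proof with the same mechanism as the paper's Proposition 2.2: a monotone harmonic-type combination of concave quantities. Note that concavity of $H$ is precisely the diagonal case of the lemma. The unit-vector infimum formula is exactly what handles non-commuting $A$ and $B$.

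Two things need cleaning up.

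First, your opening display is false as written. The outer $\sum_{k=1}^n$ multiplies a $k$-independent term by $n$, and the infimum has no index set. The correct statement is
\[
\frac1{\operatorname{tr}(A^{-1})}=\min_{(e_1,\dots,e_n)\ \text{orthonormal}}\Bigl(\sum_{l=1}^n\langle Ae_l,e_l\rangle^{-1}\Bigr)^{-1}.
\]
It follows from $\langle A^{-1}e,e\rangle\ge\langle Ae,e\rangle^{-1}$ for unit $e$, with equality on an eigenbasis. It gives a third proof, as a minimum of the concave functions $A\mapsto H(\langle Ae_1,e_1\rangle,\dots,\langle Ae_n,e_n\rangle)$. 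Either correct it or drop it, since you never use it.

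Second, the harmonic-mean inequality should read
\[
\Bigl(\sum_k(a_k+b_k)^{-1}\Bigr)^{-1}\ge\Bigl(\sum_k a_k^{-1}\Bigr)^{-1}+\Bigl(\sum_k b_k^{-1}\Bigr)^{-1}.
\]
This is superadditivity, which together with $1$-homogeneity yields concavity of $H$; it is not itself a statement about the Hessian.
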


This is the matrix lemma used in \cite{AlvarezLasryLions1997}.

We recall the level-set form of the Bian--Guan microscopic convexity
principle; see \cite[Theorem~1.2]{BianGuan2010}.

\begin{theorem}\label{thm:bian-guan}
Let $W\in C^{2,1}(\Omega)$ be a convex solution of
\[
F(D^2W,\nabla W,W,x)=0
\]
in a connected domain $\Omega\subset\mathbb R^n$, where
$F\in C^{2,1}(S^n\times\mathbb R^n\times\mathbb R\times\Omega)$. Suppose that:
\begin{itemize}
\item[(i)] the equation is elliptic along $W$, namely $(F^{ij})>0$;
\item[(ii)] $F(0,\nabla W(x),W(x),x)\neq0$ for every $x\in\Omega$;
\item[(iii)] for every $x\in\Omega$, with $p=\nabla W(x)$, the zero sublevel set
\[
\Gamma_F(p):=
\{(A,z,y)\in S_{++}^n\times\mathbb R\times\Omega:
F(A^{-1},p,z,y)\leq0\}
\]
is locally convex at every point of the form $(A,W(x),x)$ belonging to
$\Gamma_F(p)$.
\end{itemize}
Then $\operatorname{rank}D^2W$ is constant in $\Omega$.
\end{theorem}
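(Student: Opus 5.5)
This is the microscopic convexity principle of Bian and Guan, and we only sketch the structure of its proof, following \cite{BianGuan2010,CaffarelliGuanMa2007}. Let $l=\min_{x\in\Omega}\operatorname{rank}D^2W(x)$ and $\mathcal O=\{x\in\Omega:\operatorname{rank}D^2W(x)=l\}$. Since rank is lower semicontinuous, $\mathcal O$ is relatively closed in $\Omega$. It is nonempty by definition. Because $\Omega$ is connected, it suffices to show that $\mathcal O$ is open.

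First we dispose of the degenerate case $l=0$. If $D^2W(x_0)=0$, then $F(0,\nabla W(x_0),W(x_0),x_0)=0$, which contradicts (ii). Hence $l\geq1$. If $l=n$, the claim is trivial, so assume $1\leq l\leq n-1$.

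Fix $z_0\in\mathcal O$. Near $z_0$ we use the Caffarelli--Guan--Ma test function
\[
\phi=\sigma_{l+1}(D^2W)+\frac{\sigma_{l+2}(D^2W)}{\sigma_{l+1}(D^2W)},
\]
with the convention $\phi=\sigma_{l+1}$ when $l=n-1$. Since $W\in C^{2,1}$, the function $\phi$ is $C^{1,1}$ and nonnegative, and $\phi(z_0)=0$.

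At each point near $z_0$ we diagonalize $D^2W$, ordering the eigenvalues so that the $l$ ``good'' ones stay bounded below and the $n-l$ ``bad'' ones satisfy $\lambda_B\lesssim\phi$. We then differentiate the equation twice and compute $F^{ij}\phi_{ij}$ modulo $O(\phi+|\nabla\phi|)$. The bad eigenvalues contribute $\sum_{i\in B}\bigl(F^{\alpha\beta}W_{ii\alpha\beta}\bigr)$. After substituting the twice-differentiated equation, this sum becomes a quadratic expression consisting of three pieces: the second derivatives of $F$ evaluated on the third derivatives $(W_{ab i})$, the terms $W_{i}$, $W_{ii}$, $x$-derivatives, and the gradient terms $-2\sum_{\alpha\in G}F^{\alpha\beta}W_{\alpha i\,\cdot}^2/\lambda_\alpha$ coming from the inverse-matrix structure.

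The goal is the differential inequality
\[
\sum_{i,j}F^{ij}\phi_{ij}\leq C\bigl(\phi+|\nabla\phi|\bigr)
\]
in a neighborhood of $z_0$, with $C$ depending only on the $C^{2,1}$ norms involved. Given this inequality, the strong maximum principle for the uniformly elliptic operator $(F^{ij})$, which is positive definite by (i), applied to $\phi\geq0$ with $\phi(z_0)=0$, gives $\phi\equiv0$ near $z_0$. This shows that $\mathcal O$ is open.

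The main obstacle is proving that the bad quadratic form has the right sign. Here (iii) enters. Local convexity of the sublevel set $\{F(A^{-1},p,z,y)\leq0\}$ at $(A,W,x)$ can be expressed as a one-sided inequality on the second variation of $(A,z,y)\mapsto F(A^{-1},p,z,y)$ along directions tangent to the level set. Here $A^{-1}$ is understood through the regularization $D^2W+\varepsilon I$, with $\varepsilon\to0$, so that only the good block is inverted and the bad block is sent to infinity.

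To carry this out, we take $A=(D^2W+\varepsilon I)^{-1}$ and compute the variation of $A$ generated by $x$-derivatives. The second derivative of $A\mapsto F(A^{-1})$ produces exactly the combination of $F^{ab,cd}$ and $F^{ab}(A^{-1})_{bc}$ terms that appears in the bad sum. The constraint that the variation be tangent to the level set is guaranteed because the equation $F=0$ holds identically. Convexity of the sublevel set then yields the required nonpositive sign, and the remaining terms are absorbed into $C(\phi+|\nabla\phi|)$.

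What I would attempt first is to verify this identification in the model case $F=-\operatorname{tr}(D^2W)+f$. In that case (iii) reduces to concavity of $A\mapsto 1/\operatorname{tr}A^{-1}$ (Lemma \ref{lem:all}) jointly with $f$, and from there the bookkeeping extends to general $F$ as in \cite[Theorem~1.2]{BianGuan2010}.
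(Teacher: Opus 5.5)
The paper gives no proof of this statement; it quotes \cite[Theorem~1.2]{BianGuan2010}. Your skeleton is the scheme of that reference: closedness of the minimal-rank set, $l\geq1$ from (ii), the Caffarelli--Guan--Ma function $\phi=\sigma_{l+1}+\sigma_{l+2}/\sigma_{l+1}$, and the strong maximum principle for $F^{ij}\phi_{ij}\leq C(\phi+|\nabla\phi|)$. As an argument, however, it stops where the real content lies, and the mechanism you propose for the decisive step does not work as stated.

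First, the point $((D^2W+\varepsilon I)^{-1},W(x),x)$ is not in $\Gamma_F(p)$. By (i), $F(D^2W+\varepsilon I,\nabla W,W,x)=\varepsilon\operatorname{tr}(F^{ij})+O(\varepsilon^2)>0$, whereas (iii) only has content at points where $F(A^{-1},p,W(x),x)=0$.

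Second, and more seriously, consider any admissible point and the direction inducing the variation $(W_{abi})_{a,b}$ of $A^{-1}$, with $i\in B$. The second variation of $A\mapsto F(A^{-1},p,z,y)$ in this direction contains $2\sum_c\mu_c^{-1}F^{ab}W_{aci}W_{bci}$, where $\mu_c$ are the eigenvalues of $A^{-1}$ in the eigenframe of $D^2W$. The terms with $c\in G$ match the terms $-2\sigma_l(G)\sum_{j\in G}\lambda_j^{-1}F^{\alpha\beta}W_{ij\alpha}W_{ij\beta}$ coming from $D^2\sigma_{l+1}$. The terms with $c\in B$ have no counterpart. They leave in $F^{ij}\phi_{ij}$ a positive remainder of size $\mu_c^{-1}|W_{aci}|^2$, $c,i\in B$, which is in general not $O(\phi+|\nabla\phi|)$. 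Letting $\varepsilon\to0$ only makes $\mu_c^{-1}=(\lambda_c+\varepsilon)^{-1}$ larger.

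Controlling these bad--bad third derivatives is exactly what the quotient $\sigma_{l+2}/\sigma_{l+1}$ is for: its Hessian contributes negative terms of order $\sigma_1(B)^{-1}$ in the $W_{ij\alpha}$, $i,j\in B$. For these to absorb the remainder, the structure condition has to be invoked at a point of $\{F=0\}$ whose bad block of $A^{-1}$ is of the size of $\sigma_1(B)$, not in the limit $\varepsilon\to0$. Your sketch introduces the quotient but never uses it.

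Three further points need repair.

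\emph{Tangency with $p$ frozen.} This does not follow from $F\equiv0$ alone. Differentiating the equation gives $F^{ab}W_{abi}+F_{p_k}W_{ki}+F_zW_i+F_{x_i}=0$. So $(\partial_iD^2W,W_i,e_i)$, in the variables $(A^{-1},z,y)$, is tangent to $\{F(\cdot,p,\cdot,\cdot)=0\}$ only up to $F_{p_k}W_{ki}=O(\phi)$, and this holds precisely because $i$ is a bad direction. This fact, together with showing that the $p$-derivative terms in the twice-differentiated equation are $O(\phi+|\nabla\phi|)$ after absorption, is why (iii) may freeze $p$. You also need the tangential inequality for approximately tangent vectors, not just exactly tangent ones.

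\emph{The model case has the wrong sign.} $F=-\operatorname{tr}R+f$ violates (i), and its set $\{\operatorname{tr}A^{-1}\geq f\}$ is not convex for $n\geq2$. The relevant model, which is the paper's $\mathcal F_i$, is $F=\operatorname{tr}R-f$ with $f>0$. For it, $\Gamma_F(p)=\{1/\operatorname{tr}A^{-1}\geq1/f\}$ is convex by Lemma \ref{lem:all} as soon as $1/f$ is convex in $(z,y)$.

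\emph{Regularity of $\phi$.} $\phi\in C^{1,1}$ requires $W\in C^{3,1}$, not merely $W\in C^{2,1}$. You obtain $C^{3,1}$ by bootstrapping the differentiated equation using $F\in C^{2,1}$ and (i).
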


The earlier, stronger function-convexity condition appears in
\cite[Theorem~1.1]{BianGuan2009}.

\begin{theorem}\label{thm:constant-rank}
Assume the hypotheses of Theorem \ref{thm:existence-power}. Then
$\operatorname{rank}(-D^2U_i)$ is constant in $\Omega$ for every $i$.
\end{theorem}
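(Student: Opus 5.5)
Fix $i$ and set $W:=-U_i$. By Theorem \ref{thm:existence-power}, $W$ is convex, and by interior smoothness $W\in C^\infty(\Omega)$, so $W\in C^{2,1}_{\rm loc}(\Omega)$. The plan is to apply Theorem \ref{thm:bian-guan} on each compactly contained convex subdomain exhausting $\Omega$; constant rank on an exhaustion gives constant rank on $\Omega$.

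\textbf{Freezing the other components.} The coupling enters only through $U_j$, $j\neq i$, which are smooth and positive in $\Omega$. We therefore treat them as given functions of $x$ and write the $i$-th equation of \eqref{eq:transformed-system} as a scalar equation
\[
G(D^2W,\nabla W,W,x):=\operatorname{tr}(D^2W)-\widetilde F_i(-W,\nabla W,x)=0,
\]
where
\[
\widetilde F_i(z,\xi,x)=z^{-1}\Bigl[\tfrac{1-p_i}{p_i}|\xi|^2+p_i\sum_{j:i\to j}a_{ij}\bigl(U_j(x)/z\bigr)^{s_i}\Bigr]
\]
and, for $j=i$, the factor is read as $1$. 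Since $W<0$, we have $z=-W>0$, so $G$ is smooth near the graph of $(D^2W,\nabla W,W,x)$. A cutoff away from that range is harmless for the local regularity requirement $G\in C^{2,1}$. The hypotheses are then checked one by one.
\begin{itemize}
\item[(i)] Ellipticity holds because $G^{kl}=\delta_{kl}$.
\item[(ii)] We have $G(0,\nabla W,W,x)=-\widetilde F_i<0$, since $\widetilde F_i>0$: every row has an active edge with $a_{ij}>0$ and $U_j>0$.
\item[(iii)] This hypothesis carries the content of the proof, as explained next.
\end{itemize}

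\textbf{The structural condition (iii).} For fixed $p$, the sublevel set is
\[
\{(A,z,y)\in S^n_{++}\times\mathbb R\times\Omega:\ \operatorname{tr}(A^{-1})\le \widetilde F_i(-z,p,y)\}.
\]
Near points with $z<0$ it can be rewritten as
\[
\frac{1}{\operatorname{tr}(A^{-1})}\ \ge\ \frac{1}{\widetilde F_i(-z,p,y)}.
\]
By Lemma \ref{lem:all}, the left side is concave in $A$. It therefore suffices that $(z,y)\mapsto 1/\widetilde F_i(-z,p,y)$ is convex, because a superlevel set of (concave minus convex) is convex.

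Write $V=-z$. The map
\[
V\mapsto \frac{1}{\widetilde F_i}=V\,h_i\bigl((U_j(y)/V)_j\bigr)
\]
is the perspective-type expression from Proposition \ref{prop:minus-one-concavity}. That proposition already gives joint convexity in $(V,(U_j)_j)$, and $1/\widetilde F_i$ is nonincreasing in each $U_j$, since $F_i$ is nondecreasing in $U_j$. Since $y\mapsto U_j(y)$ is concave by Theorem \ref{thm:existence-power}, the composition of a convex function that is nonincreasing in its $U_j$-arguments with concave maps $y\mapsto U_j(y)$ is convex, by the standard composition rule. Together with the affine dependence $V=-z$, this shows that $(z,y)\mapsto 1/\widetilde F_i(-z,p,y)$ is convex. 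Hence $\Gamma_G(p)$ is convex, and in particular locally convex at the required points.

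\textbf{Main obstacle.} The delicate point is the regularity and compatibility bookkeeping in Theorem \ref{thm:bian-guan}, not the convexity itself. The requirement $G\in C^{2,1}$ in $x$ needs $U_j\in C^{2,1}$, which holds locally, and exhausting $\Omega$ by smooth convex subdomains $\Omega_k\Subset\Omega$ handles this. We also must make sure the $x$-dependence through the frozen $U_j$ is allowed in the Bian--Guan framework, where convexity is required jointly in $(A^{-1}$-variable$,z,y)$. This is exactly where the concavity of all $U_j$ from the first theorem and the monotonicity in Proposition \ref{prop:minus-one-concavity} are indispensable. If the version cited demands the sublevel condition only at points $(A,W(x),x)$, our global convexity suffices a fortiori. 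Rank constancy on each connected $\Omega_k$ then passes to $\Omega$, which proves the theorem for every $i$.
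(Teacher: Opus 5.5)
Your proposal is correct and follows essentially the same route as the paper. You freeze the components $U_j$, $j\neq i$, as concave functions of $x$ and check ellipticity and $G(0,\nabla W,W,x)<0$. You then verify the level-set condition of Theorem~\ref{thm:bian-guan} exactly as the paper does: concavity of $A\mapsto 1/\operatorname{tr}(A^{-1})$ (Lemma~\ref{lem:all}) is combined with convexity of $(z,x)\mapsto 1/F_i$, which comes from the perspective convexity and monotonicity in Proposition~\ref{prop:minus-one-concavity} together with the concavity of the $U_j$. The only difference is cosmetic: the paper writes out the cutoff $\chi$ in the value variable on connected subdomains $\Omega'\Subset\Omega$, whereas you only sketch this localization.
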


\begin{proof}

Fix $i$ and set
\[
W_i=-U_i.
\]
Then $W_i\in C^\infty(\Omega)$, $W_i$ is convex, and $D^2W_i=-D^2U_i$ by Theorem \ref{thm:existence-power}. To keep the dependence on the remaining components explicit, we introduce the following notation. For $z<0$ and $x\in \Omega$, define
\[
V_k^{(i)}(z,x)=
\begin{cases}
-z, & k=i,\\
U_k(x), & k\neq i.
\end{cases}
\]
Thus $V^{(i)}(W_i(x),x)=U(x)$. Since $\nabla U_i=-\nabla W_i$, the $i$-th transformed equation can be written as
\[
\Delta W_i=F_i\bigl(V^{(i)}(W_i(x),x),-\nabla W_i\bigr).
\]
Equivalently,
\[
\mathcal F_i(D^2W_i,\nabla W_i,W_i,x)=0,
\]
where
\[
\mathcal F_i(R,p,z,x)
=
\operatorname{tr}R
-F_i\bigl(V^{(i)}(z,x),-p\bigr).
\]
In this notation, the $i$-th component is represented by the scalar value variable $z=W_i$, while the remaining components enter through the explicit $x$-dependence $U_k(x)$, $k\neq i$. The operator $\mathcal F_i$ is smooth on
$S^n\times\mathbb R^n\times(-\infty,0)\times\Omega$.

Theorem \ref{thm:bian-guan} is stated for operators defined for every real
value variable. We therefore use a standard localization. Fix a connected
subdomain $\Omega'\subset\Omega$. Choose an interval
$J\subset(-\infty,0)$ containing $W_i(\overline{\Omega'})$ and a smooth map
$\chi:\mathbb R\to(-\infty,0)$ which equals the identity on a neighborhood
of $\overline J$. Then
\[
\widetilde{\mathcal F}_i(R,p,z,x)
:=\mathcal F_i(R,p,\chi(z),x),
\qquad x\in\Omega',
\]
belongs to
$C^\infty(S^n\times\mathbb R^n\times\mathbb R\times\Omega')$ and agrees
with $\mathcal F_i$ in a neighborhood of the range of
$(D^2W_i,\nabla W_i,W_i,x)$ over $\Omega'$. Since $\chi$ is the identity in a neighborhood of $W_i(\overline{\Omega'})$, the operators $\widetilde{\mathcal F}_i$ and $\mathcal F_i$, together with their local zero sublevel sets, agree near every point relevant to the solution. Hence the structural condition may be checked using $\mathcal F_i$ on $z<0$.

Ellipticity is immediate because
\[
\frac{\partial \mathcal F_i}{\partial R_{ab}}=\delta_{ab}.
\]
Moreover,
\[
\mathcal F_i(0,p,z,x)
=
-F_i\bigl(V^{(i)}(z,x),-p\bigr)<0
\]
for every $z<0$ and $x\in\Omega$, since all components of
$V^{(i)}(z,x)$ are positive.

It remains to verify the Bian--Guan level-set convexity condition. Using the explicit expression \eqref{eq:F-def},
\[
F_i(U,\xi)=U_i^{-1}\left[
\frac{1-p_i}{p_i}|\xi|^2
+p_i\sum_{j:i\to j}a_{ij}\left(\frac{U_j}{U_i}\right)^{s_i}
\right].
\]
Substituting $U=V^{(i)}(z,x)$ and $\xi=-p$ gives
\[
F_i\bigl(V^{(i)}(z,x),-p\bigr)
=
\frac1{-z}\left[
\frac{1-p_i}{p_i}|p|^2+p_ia_{ii}
+p_i\sum_{\substack{j:i\to j\\j\neq i}}a_{ij}
\left(\frac{U_j(x)}{-z}\right)^{s_i}
\right].
\]
Here the self-coupling term is $p_ia_{ii}$ because
$V_i^{(i)}(z,x)/(-z)=1$. Therefore, for fixed $p$,
\[
\mathcal F_i(A^{-1},p,z,x)\leq0
\]
is equivalent to
\[
\operatorname{tr}(A^{-1})
\leq
\frac1{-z}\left[
\frac{1-p_i}{p_i}|p|^2+p_ia_{ii}
+p_i\sum_{\substack{j:i\to j\\j\neq i}}a_{ij}
\left(\frac{U_j(x)}{-z}\right)^{s_i}
\right],
\]
or equivalently
\begin{equation}\label{eq:level-set-expanded}
\frac1{\operatorname{tr}(A^{-1})}
\geq
\frac{-z}{
\displaystyle
\frac{1-p_i}{p_i}|p|^2+p_ia_{ii}
+p_i\sum_{\substack{j:i\to j\\j\neq i}}a_{ij}
\left(\frac{U_j(x)}{-z}\right)^{s_i}}
=
\frac1{F_i\bigl(V^{(i)}(z,x),-p\bigr)}.
\end{equation}
This is the expanded form of the level-set inequality. In particular, every active component $U_j(x)$ with $j\neq i$ remains present, while the self-coupling term is constant inside the brackets.

We now prove that the right-hand side of \eqref{eq:level-set-expanded} is convex in $(z,x)$. Let
\[
G_i(U)=\frac1{F_i(U,-p)}.
\]
By Proposition \ref{prop:minus-one-concavity}, $G_i$ is convex in the positive value variables $U$. Also, since $F_i$ is nondecreasing in every active component $U_j$, $j\neq i$, the function $G_i=1/F_i$ is nonincreasing in those components.

Let $0<\theta<1$ and set
\[
(z_\theta,x_\theta)=(1-\theta)(z_1,x_1)+\theta(z_2,x_2).
\]
For the $i$-th component,
\[
-z_\theta=(1-\theta)(-z_1)+\theta(-z_2).
\]
For every active edge $i\to j$ with $j\neq i$, the concavity of $U_j$ gives
\[
U_j(x_\theta)\geq(1-\theta)U_j(x_1)+\theta U_j(x_2).
\]
Components which are inactive in the $i$-th equation do not enter $G_i$. Since $G_i$ is nonincreasing in each active $U_j$ with $j\neq i$, we obtain
\[
G_i\bigl(V^{(i)}(z_\theta,x_\theta)\bigr)
\leq
G_i\bigl((1-\theta)V^{(i)}(z_1,x_1)+\theta V^{(i)}(z_2,x_2)\bigr).
\]
Using the convexity of $G_i$ in the value variables, the right-hand side is bounded by
\[
(1-\theta)G_i\bigl(V^{(i)}(z_1,x_1)\bigr)
+\theta G_i\bigl(V^{(i)}(z_2,x_2)\bigr).
\]
Thus
\[
(z,x)\mapsto \frac1{F_i\bigl(V^{(i)}(z,x),-p\bigr)}
\]
is convex.

On the other hand,
\[
A\mapsto\frac1{\operatorname{tr}(A^{-1})}
\]
is concave on $S_{++}^n$ by Lemma \ref{lem:all}. Hence the set defined by \eqref{eq:level-set-expanded} is the superlevel set
\[
\left\{
(A,z,x):
\frac1{\operatorname{tr}(A^{-1})}
-\frac1{F_i\bigl(V^{(i)}(z,x),-p\bigr)}
\geq0
\right\}
\]
of a concave function, and is therefore convex. Hence the local
zero-sublevel convexity condition in Theorem \ref{thm:bian-guan} holds at all
relevant points. Applying that theorem to
$\widetilde{\mathcal F}_i$ on $\Omega'$ shows that the rank of
$D^2W_i=-D^2U_i$ is constant in $\Omega'$. Since any two points of the
convex domain $\Omega$ are contained in some connected subdomain
$\Omega'\Subset\Omega$, the rank is constant throughout $\Omega$.
\end{proof}

\subsection{Boundary strictness and strict power concavity}
We next record the boundary convexity lemma needed below. For completeness, we include its proof. Related boundary arguments appear in Caffarelli and Friedman \cite{CaffarelliFriedman1985} and Korevaar \cite{Korevaar1983}. Throughout this subsection, if $\nu$ denotes the exterior unit normal, we use the convention
\[
\mathrm{II}(\tau,\tau):=\langle D_\tau\nu,\tau\rangle.
\]
Thus uniform convexity means that there exists $\kappa>0$ such that
$\mathrm{II}(\tau,\tau)\geq\kappa|\tau|^2$ for every tangent vector $\tau$ on $\partial\Omega$.

\begin{lemma}\label{lem:boundary1}
Let $\Omega\subset\mathbb R^n$ be a bounded $C^2$,
uniformly convex domain. Let
$u\in C^{\infty}(\Omega)\cap C^{2}(\overline{\Omega})$ satisfy
\begin{equation}\label{eq:boundary-data}
u<0\ \text{in }\Omega,\quad u=0\ \text{on }\partial\Omega,\quad \partial_\nu u>0\ \text{on }\partial\Omega,
\end{equation}
where $\nu$ is the exterior normal to $\partial\Omega$. Let $f\in C^2((-\delta,0))$ for some $\delta>0$, and set $v=f(u)$ in a boundary collar where $-\delta<u<0$. If
\[
f'>0,\qquad f''>0,\qquad \lim_{s\to0^{-}}\frac{f'(s)}{f''(s)}=0,
\]
then there exists $\varepsilon>0$ such that $v$ is strictly convex in
\[
\{x\in\Omega:0<d(x,\partial\Omega)<\varepsilon\}.
\]
\end{lemma}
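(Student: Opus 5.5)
The Hessian of $v=f(u)$ is
\[
D^2v=f'(u)\,D^2u+f''(u)\,\nabla u\otimes\nabla u
=f''(u)\Bigl[\tfrac{f'(u)}{f''(u)}D^2u+\nabla u\otimes\nabla u\Bigr],
\]
so we need the bracket to be positive definite near $\partial\Omega$. Since $f''>0$, it suffices to show
\[
M(x):=\varepsilon(x)D^2u(x)+\nabla u(x)\otimes\nabla u(x)>0,
\qquad \varepsilon(x):=\frac{f'(u(x))}{f''(u(x))}\to0^+
\]
as $x\to\partial\Omega$ (since $u\to0^-$). The plan is to split each vector $\xi$ into its component along $\nabla u$ and the orthogonal complement. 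We estimate the tangential block using the boundary geometry, and absorb the cross terms using the smallness of $\varepsilon$.

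\textbf{Step 1 (boundary geometry).} At $y\in\partial\Omega$ we have $u=0$, so $\nabla u=(\partial_\nu u)\nu$ with $\partial_\nu u\geq c_0>0$ by compactness. Differentiating $u=0$ twice along the boundary gives, for tangent $\tau$,
\[
D^2u(y)(\tau,\tau)=-\partial_\nu u(y)\,\mathrm{II}(\tau,\tau).
\]
With the paper's sign convention this should be $\ge c_0\kappa|\tau|^2>0$; for example, $u=|x|^2-1$ on the unit ball gives $D^2u=2I$. I will double-check this sign. By continuity of $D^2u$ and $\nabla u$ up to the boundary ($u\in C^2(\overline\Omega)$), there is a collar $\{d<\varepsilon_0\}$ on which $|\nabla u|\ge c_0/2$ and
\[
D^2u(\eta,\eta)\geq \tfrac{c_0\kappa}{2}|\eta|^2
\qquad\text{for all } \eta\perp\nabla u(x).
\]
This uses that the direction $\nabla u/|\nabla u|$ is continuous up to the boundary, where it equals $\nu$.

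\textbf{Step 2 (quadratic form estimate).} Write $\xi=a\,e+\eta$ with $e=\nabla u/|\nabla u|$ and $\eta\perp e$. Let $K=\|D^2u\|_{L^\infty}$ and $\mu=c_0\kappa/2$. Then
\[
M(\xi,\xi)\ge \varepsilon\mu|\eta|^2-2\varepsilon K|a||\eta|-\varepsilon K a^2+\tfrac{c_0^2}{4}a^2.
\]
Applying Young's inequality to the cross term,
\[
2\varepsilon K|a||\eta|\le \tfrac{\varepsilon\mu}{2}|\eta|^2+\tfrac{2\varepsilon K^2}{\mu}a^2,
\]
so
\[
M(\xi,\xi)\ge\tfrac{\varepsilon\mu}{2}|\eta|^2+\Bigl(\tfrac{c_0^2}{4}-\varepsilon K-\tfrac{2\varepsilon K^2}{\mu}\Bigr)a^2 .
\]
Because $\varepsilon(x)\to0$ as $u\to0$, and $u\to0$ uniformly as $d\to0$, we can shrink the collar so that the coefficient of $a^2$ is $\geq c_0^2/8$. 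Then $M>0$ for every $\xi\neq0$. Hence $D^2v>0$ pointwise in the collar, so $v$ is strictly convex there in the sense of a positive definite Hessian; convexity along segments is only local, since the collar itself is not convex.

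\textbf{Main obstacle.} The delicate step is Step 1. It requires getting the sign convention for $\mathrm{II}$ and $\nu$ right, and making sure the tangential positivity at the boundary propagates to vectors orthogonal to $\nabla u$ at interior points of the collar. This propagation is exactly where the $C^2(\overline\Omega)$ regularity and the strict positivity $\partial_\nu u>0$ are used.
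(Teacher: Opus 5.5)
Your proof is correct and follows the paper's argument essentially step for step: tangential positivity of $D^2u$ at $\partial\Omega$ from uniform convexity and $\partial_\nu u>0$, propagation by continuity to vectors orthogonal to $\nabla u$ in a thin strip, then splitting $\xi$ along $\nabla u/|\nabla u|$ and absorbing the cross term with Young's inequality while $f'(u)/f''(u)\to0$. The one fix is the sign you flagged. With the paper's convention $\mathrm{II}(\tau,\tau)=\langle D_\tau\nu,\tau\rangle$, the boundary identity is $D^2u(\tau,\tau)=\partial_\nu u\,\mathrm{II}(\tau,\tau)$, with no minus sign, and your example $u=|x|^2-1$ on the unit ball confirms this.
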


\begin{proof}
On $\partial\Omega$, $\nabla u=(\partial_\nu u)\nu$. If $\tau$ is tangent to $\partial\Omega$, differentiating the identity $u=0$ twice along the boundary gives
\[
D^2u(\tau,\tau)=\partial_\nu u\,\mathrm{II}(\tau,\tau),
\]
with the convention for $\mathrm{II}$ fixed above. Uniform convexity and the positivity of $\partial_\nu u$ give a uniform
positive lower bound for this quadratic form on unit tangent vectors at the
boundary. By compactness and continuity, the bound persists in a sufficiently
thin boundary strip:
\begin{equation}\label{eq:level-tangent-positive}
D^2u(\eta,\eta)\geq c_1|\eta|^2
\qquad\hbox{whenever }\eta\perp\nabla u,
\end{equation}
for some $c_1>0$. Indeed, otherwise one could take points approaching the
boundary and unit vectors orthogonal to $\nabla u$ for which the estimate
fails; a convergent subsequence would yield a tangent unit vector at the
boundary and contradict the boundary lower bound. After shrinking the strip,
we also have $|\nabla u|\geq c_2>0$ there.

Set $q(u)=f'(u)/f''(u)$. Since $q(u)\to0$ as $u\to0^-$,
\[
\frac{D^2v}{f''(u)}
=
\nabla u\otimes\nabla u+q(u)D^2u.
\]
Let $e=\nabla u/|\nabla u|$ and write $\xi=ae+\eta$, where
$\eta\perp e$. Since $D^2u$ is bounded in the boundary strip,
\eqref{eq:level-tangent-positive} and Young's inequality give a constant $C>0$ such that
\[
\bigl(\nabla u\otimes\nabla u+q(u)D^2u\bigr)[\xi,\xi]
\geq
\bigl(c_2^2-Cq(u)\bigr)a^2+\frac{c_1}{2}q(u)|\eta|^2.
\]
For a sufficiently thin strip, $q(u)$ is small enough that the right-hand side is positive for every nonzero $\xi$. Since $f''(u)>0$, this proves $D^2v>0$ in that strip.
\end{proof}

\begin{remark}
The boundary regularity assertion in Theorem
\ref{thm:existence-power} supplies precisely the
$C^2(\overline\Omega)$ regularity needed here when
$\partial\Omega\in C^{2,\sigma}$. No regularity of $u^p$ up to $\partial\Omega$ is required.
\end{remark}

By Lemma~\ref{lem:boundary1}, we obtain the following conclusion.

\begin{lemma}\label{lem:boundary-strict}
Let $\Omega\subset\mathbb R^n$ be a bounded $C^2$, uniformly convex domain. Let
$u\in C^\infty(\Omega)\cap C^2(\overline\Omega)$ satisfy
\[
u>0\quad\hbox{in }\Omega,\quad
u=0\quad\hbox{on }\partial\Omega,\quad
\partial_\nu u<0\quad\hbox{on }\partial\Omega,
\]
where $\nu$ is the exterior unit normal. If $0<p<1$, then
\[
D^2(u^p)<0
\]
in a sufficiently thin boundary strip.
\end{lemma}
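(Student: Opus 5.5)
The plan is to reduce Lemma~\ref{lem:boundary-strict} to Lemma~\ref{lem:boundary1} by a sign flip. Set
\[
w:=-u,\qquad f(s):=-(-s)^p \quad (s<0).
\]
Then $w<0$ in $\Omega$, $w=0$ on $\partial\Omega$, and $\partial_\nu w=-\partial_\nu u>0$ on $\partial\Omega$. Also $w\in C^\infty(\Omega)\cap C^2(\overline\Omega)$, so $w$ satisfies \eqref{eq:boundary-data}. With this choice,
\[
f(w)=-(u)^p=-u^p,
\]
so strict convexity of $f(w)$ in a boundary strip is exactly $D^2(u^p)<0$ there.

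Next we check the three conditions on $f$ in Lemma~\ref{lem:boundary1}, on $(-\delta,0)$ with, say, $\delta=1$. Direct differentiation gives
\[
f'(s)=p(-s)^{p-1}>0,\qquad f''(s)=p(1-p)(-s)^{p-2}>0,
\]
using $0<p<1$. Their ratio is
\[
\frac{f'(s)}{f''(s)}=\frac{-s}{1-p}\longrightarrow0 \quad\text{as } s\to0^-.
\]
Finally, $f\in C^2((-\delta,0))$ because $f$ is smooth away from $s=0$.

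Lemma~\ref{lem:boundary1} then yields $\varepsilon>0$ such that $f(w)=-u^p$ is strictly convex in $\{0<d(x,\partial\Omega)<\varepsilon\}$. The proof of that lemma actually shows positive definiteness of the Hessian, $D^2 f(w)>0$, not merely strict convexity. Hence $D^2(u^p)<0$ there.

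The one point needing care is that Lemma~\ref{lem:boundary1} requires $-\delta<w<0$ on the strip where $v=f(w)$ is defined. Since $u$ is continuous on $\overline\Omega$ and vanishes on $\partial\Omega$, we can shrink $\varepsilon$ so that $0<u<\delta$ on the strip. No real obstacle is expected beyond this bookkeeping.
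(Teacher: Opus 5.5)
Your proposal is correct and matches the paper's proof: the same sign flip $w=-u$, the same choice $f(s)=-(-s)^p$, the same verification of $f'>0$, $f''>0$ and $f'/f''=(-s)/(1-p)\to0$, and then an application of Lemma~\ref{lem:boundary1}. Your remarks that that lemma's proof actually gives $D^2f(w)>0$ (not just strict convexity) and that the strip can be shrunk so $-\delta<w<0$ are harmless bookkeeping the paper leaves implicit.
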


\begin{proof}
Apply Lemma \ref{lem:boundary1} to $w=-u<0$ and
\[
f(s)=-(-s)^p,\quad s<0.
\]
Indeed,
\[
f'(s)=p(-s)^{p-1}>0,\quad
f''(s)=p(1-p)(-s)^{p-2}>0,\quad
\frac{f'(s)}{f''(s)}=\frac{-s}{1-p}\longrightarrow0
\]
as $s\to0^-$. Hence $f(w)=-u^p$ is strictly convex in a sufficiently
thin boundary strip, which is equivalent to $D^2(u^p)<0$ there.
\end{proof}

\begin{theorem}\label{thm:strict}
Assume the hypotheses of Theorem \ref{thm:existence-power}. If, in addition,
$\partial\Omega\in C^{2,\sigma}$ for some $0<\sigma<1$ and $\Omega$ is uniformly convex, then $D^2U_i$ is negative definite in $\Omega$ for every $i$.
In particular, every $U_i$ is strictly concave in $\Omega$.
\end{theorem}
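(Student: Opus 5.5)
The plan is to combine the constant-rank result, Theorem \ref{thm:constant-rank}, with the boundary strictness in Lemma \ref{lem:boundary-strict}. For each fixed $i$, Theorem \ref{thm:constant-rank} already shows that $\operatorname{rank}(-D^2U_i)$ is constant in $\Omega$. It therefore suffices to find one point of $\Omega$ where $-D^2U_i$ has full rank $n$. Since Theorem \ref{thm:existence-power} gives $-D^2U_i\geq0$ throughout $\Omega$, constant rank $n$ then means $-D^2U_i$ is positive definite everywhere, which is the claim.

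To find such a point, I will verify the hypotheses of Lemma \ref{lem:boundary-strict} for $u=u_i$ and $p=p_i$. First, $0<p_i<1/2$ by \eqref{eq:pi-half}. Second, since $\partial\Omega\in C^{2,\sigma}$, Theorem \ref{thm:existence-power} gives $u_i\in C^{2,\delta_i}(\overline\Omega)\subset C^2(\overline\Omega)$, and also $u_i\in C^\infty(\Omega)$. Third, $u_i>0$ in $\Omega$ and $u_i=0$ on $\partial\Omega$. Finally, we need $\partial_\nu u_i<0$ on $\partial\Omega$. This follows from the Hopf lemma: $-\Delta u_i=\sum_j a_{ij}u_j^{\alpha_j}\geq0$, $u_i>0$ inside, $u_i=0$ on the boundary, and a $C^2$ boundary satisfies the interior ball condition. (It also matches the lower bound $u_i\geq c_id$ already used in Section 3.) Lemma \ref{lem:boundary-strict} then yields $D^2U_i<0$ in a thin strip $\{0<d<\varepsilon\}$, so rank $n$ is attained at some point of $\Omega$.

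The remaining step is to apply the constant-rank theorem on all of $\Omega$. Since $\Omega$ is convex, it is connected, and Theorem \ref{thm:constant-rank} gives a constant rank on the whole domain; the rank equals $n$ because of the strip. Hence $D^2U_i<0$ everywhere. Strict concavity follows because a $C^2$ function with negative definite Hessian on a convex set is strictly concave along every segment.

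The only delicate point is the boundary input, namely the $C^2$ regularity up to the boundary together with the Hopf sign. This regularity is exactly why the additional assumptions $\partial\Omega\in C^{2,\sigma}$ and uniform convexity are imposed, and both inputs are supplied by Theorem \ref{thm:existence-power} and the Hopf lemma, so I expect no genuine obstacle.
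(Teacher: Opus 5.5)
Your proposal is correct and follows the paper's proof essentially verbatim. Theorem \ref{thm:constant-rank} gives constant rank of $-D^2U_i$, and Lemma \ref{lem:boundary-strict} gives full rank in a boundary strip, using $C^2(\overline\Omega)$ regularity from Theorem \ref{thm:existence-power} and $\partial_\nu u_i<0$ from Hopf's lemma, so $-D^2U_i>0$ throughout $\Omega$; you only spell out the hypothesis checks (the range of $p_i$, superharmonicity of $u_i$, connectedness) a bit more explicitly than the paper does.
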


\begin{proof}
Theorem \ref{thm:existence-power} gives
$u_i\in C^2(\overline\Omega)\cap C^\infty(\Omega)$ and $-D^2U_i\geq0$. By Theorem
\ref{thm:constant-rank}, $\operatorname{rank}(-D^2U_i)$ is constant in
$\Omega$. Combing Hopf's lemma with Lemma
\ref{lem:boundary-strict}, we have $-D^2U_i>0$ near $\partial\Omega$. Hence the constant rank is $n$. Therefore $D^2U_i<0$ throughout $\Omega$, and $U_i$ is strictly concave for every $i$.
\end{proof}

\small
\bibliographystyle{alpha}
\bibliography{ref}

\medskip
\noindent
(Jiahuan Li) Department of Mathematics, University of Science and Technology
of China, Hefei 230026, Anhui Province, China.
Email address:
\href{mailto:jiahuan@mail.ustc.edu.cn}
{\texttt{jiahuan@mail.ustc.edu.cn}}.

\medskip
\noindent
(Hanpeng Zhou) Department of Mathematics, University of Science and Technology
of China, Hefei 230026, Anhui Province, China.
Email address:
\href{mailto:zhouhanpeng@mail.ustc.edu.cn}
{\texttt{zhouhanpeng@mail.ustc.edu.cn}}.

\end{document}